\newtheorem{theorem}{Theorem}[section]
\newtheorem{lemma}[theorem]{Lemma}
\newtheorem{proposition}[theorem]{Proposition}
\newtheorem{corollary}[theorem]{Corollary}
\begin{document}
\date{}
\title{\bf On the super graphs and reduced super graphs of some finite groups}
\author{Sandeep Dalal \and Sanjay Mukherjee \and Kamal Lochan Patra}
\maketitle
\begin{abstract}
For a finite group $G$, let $B$ be an equivalence (equality, conjugacy or order) relation on  $G$ and let $A$ be a (power, enhanced power or commuting) graph with vertex set $G$. The $B$ super $A$ graph  is a simple graph with vertex set $G$ and two vertices are adjacent if either they are in the same $B$-equivalence class or there are elements in their $B$-equivalence classes that are adjacent in the original $A$ graph. The graph obtained by deleting the dominant vertices (adjacent to all other vertices) from a $B$ super $A$ graph is called the reduced $B$ super $A$ graph. In this article, for some pairs of $B$ super $A$ graphs, we characterize the finite groups  for which a pair of graphs are equal. We also characterize  the dominant vertices for the order super commuting graph $\Delta^o(G)$ of $G$ and  prove that for $n\geq 4$ the identity element is the only dominant vertex of  $\Delta^o(S_n)$ and $\Delta^o(A_n)$. We characterize the values of $n$ for which the reduced order super commuting graph $\Delta^o(S_n)^*$ of $S_n$ and the reduced order super commuting graph $\Delta^o(A_n)^*$ of $A_n$ are connected. We also prove that if $\Delta^o(S_n)^*$ (or  $\Delta^o(A_n)^*$) is  connected then the diameter is at most $3$ and shown that the diameter is $3$ for many value of $n.$ 
\end{abstract}

\noindent {\bf Key words:}  Commuting graph, Dominant vertex, Enhanced power graph, Power graph \\
{\bf AMS subject classification.} 05C25, 20D60 

\section{Introduction}
Throughout this paper, graphs are finite, simple and undirected. Let $\Gamma$ be a graph with vertex set $V(\Gamma)$ and edge set $E(\Gamma)$. For $u,v\in V(\Gamma)$, we say $u$ is \emph{adjacent} to $v$ if there is an edge between $u$ and $v$, and we denote it by $u \sim v$. The \emph{neighborhood} $N(v)$ of a vertex $v$ is the set of all vertices adjacent to $x$ in $\Gamma$. By $N[v]$, we mean $N[v]=N(v)\cup \{v\}$. A vertex $v$ is called a \emph{dominant vertex} if $N[v]=V(\Gamma)$. We denote the set of dominant vertices of $\Gamma$ by $_d\Gamma$. We say $\Gamma$ is complete if $_d\Gamma =V(\Gamma)$. For a connected graph $\Gamma$, the \emph{distance} between vertices $u$ and $v$, denoted by $d(u, v)$, is the length of a shortest path from $u$ to $v$. The \emph{diameter} of $\Gamma$, denoted by diam($\Gamma$), is defined as diam($\Gamma$)$=\max\{d(u,v)|u,v\in V(\Gamma)\}.$ We refer to \cite{We} for the unexplained graph theoretic terminologies used in this paper.

Graphs defined on groups has a long history. Many graphs are defined with vertex set being a group $G$ and the edge set reflecting the  structure of $G$ in some way, for example, Cayley graph, commuting graph, prime graph, intersection graph etc. For more on different graphs defined on groups, we refer to the survey paper \cite{Ca}. Add to this study, Arunkumar \emph{et al.} in \cite{AC}, introduced the notion of super graph on a group. Let $G$ be a finite group and let $B$ be an equivalence relation defined on $G$. For $g\in G$, let $[g]_B$ be the $B$-equivalence class of $g$ in $G$. Let $A$ be a graph with $V(A)=G$. The $B$ super $A$ graph is the simple graph with vertex set $G$ and two vertices $g$ and $h$ are adjacent if and only if there exists $g'\in [g]_B$ and $h'\in [h]_B$ such that $g'$ and $h'$ are adjacent in the graph $A$. It is also assumed that the subgraph induced by the vertices of  $[g]_B$ in the $B$ super $A$ graph is complete and the reason has been discussed in \cite{AC} (see Section 1.2).

For our study on super graphs on $G$, we consider the following three types of graphs and three types of equivalence relations on $G$.\\
Three graphs on $G$:
\begin{enumerate}
\item[(a)] The \emph{power graph} $\mathcal{P}(G)$  of  $G$ is a simple graph with vertex set $G$ and two vertices are adjacent if one of them is a positive integral power of other.
\item[(b)]  The \emph{enhanced power graph} $\mathcal{P}_e(G)$ is the simple graph with vertex set $G$ and two different elements of $G$ are adjacent whenever both are contained in a cyclic subgroup of $G$.
\item[(c)] The \emph{commuting graph} $\Delta(G)$ is a graph with vertex set $G$ and two distinct vertices are adjacent whenever they commute.
\end{enumerate}
Three equivalence relations on $G$:
\begin{enumerate}
\item[(i)] \emph{equality relation}, $(x, y) \in \rho_e$ if and only if $x = y$;
\item[(ii)] \emph{conjugacy relation}, $(x, y) \in \rho_c$ if and only if $x = gyg^{-1}$ for some $g \in G$;
\item[(iii)] \emph{order relation}, $(x, y) \in \rho_o$ if and only if $o(x) = o(y)$, where $o(g)$ denotes the order of  $g\in G$.
\end{enumerate}
In \cite{AB,ABJ,AK1,BF,DB,GP,Sh,Wo}, the authors have studied different aspects of commuting graphs associated with various algebraic structures and for more on power graphs and enhanced power graphs of groups, we refer to the survey papers \cite{AK,KS,MK} and the reference therein. It is clear that the equality super $A$ graph is same as the graph $A$. We denote the order super $A$ graph and the conjugacy super $A$ graph by $A^o (G)$ and $A^c(G)$, respectively, where $A(G)\in \{\mathcal{P}(G), \mathcal{P}_e(G) ,\Delta(G)\}$. So with the above three graphs and three equivalence relations, we have nine super graphs defined on $G$. Note that if $x$ and $y$ are conjugate in $G$ then $o(x)=o(y)$. It is now straightforward to check the following six containment relations among these nine graphs:
\begin{enumerate}
\item[(i)] $\mathcal{P}(G)\subseteq \mathcal{P}_e(G) \subseteq \Delta(G)$;
\item[(ii)] $\mathcal{P}^c(G)\subseteq \mathcal{P}_e^c(G) \subseteq \Delta^c(G)$;
\item[(iii)] $\mathcal{P}^o(G)\subseteq \mathcal{P}_e^o(G) \subseteq \Delta^o(G)$;
\item[(iv)] $\mathcal{P}(G)\subseteq \mathcal{P}^c(G) \subseteq \mathcal{P}^o(G)$;
\item[(v)] $\mathcal{P}_e(G)\subseteq \mathcal{P}_e^c(G) \subseteq \mathcal{P}_e^o(G)$;
\item[(vi)] $\Delta(G)\subseteq \Delta^c(G)\subseteq \Delta^o(G)$.
\end{enumerate}

It is proved that for any finite group $G$, $\mathcal{P}_e^o(G)= \Delta^o(G)$ (see \cite{AC}, Theorem 1). Also it is known that any two of the remaining eight graphs are unequal for some group $G$. To characterize the groups for which any two graphs in one of the above containment relation coincide, is an interesting problem. In \cite{AA}, the authors have determined the finite groups for which $\mathcal{P}(G)= \mathcal{P}_e(G)$ (Theorem 28), $\mathcal{P}_e(G)= \Delta(G)$ (Theorem 30) and $\mathcal{P}(G)= \Delta(G)$ (Theorem 22). In \cite{AC}, the finite groups are characterized for $\mathcal{P}(G)= \mathcal{P}^c(G)$ (Theorem 3), $\mathcal{P}_e(G)= \mathcal{P}_e^c(G)$ (Theorem 3) and $\Delta(G)= \Delta^c(G)$  (Theorem 2).

This paper is structured as follows. In Section $2$, we recall some basic definitions and required results. In Section $3$, for some pair of super graphs, we classify the finite groups for which the two graphs in that pair are equal. In section $4$, we first give a characterization of dominant vertices of order super commuting graphs  and then study the connectedness and diameter of the graph obtained by removing the dominant vertices from order super commuting graph of symmetric group $S_n$ and alternating group $A_n$.

\section{Preliminaries}

Throughout this article groups are finite. Let $G$ be a group with the identity element $e$. We denote by $\langle x \rangle$ the cyclic subgroup of $G$ generated by an element $x\in G$. A cyclic subgroup $H$ of $G$ is called a \emph{maximal cyclic subgroup} if it is not properly contained in any cyclic subgroup of $G$. If $G$ is cyclic then the only maximal cyclic subgroup of $G$ is itself. In a finite group, every element is contained in a maximal cyclic subgroup. The \emph{exponent} of $G$ is defined as the least common multiple of the orders of all elements of $G$. The following result is straight forward.
\begin{lemma}\label{l=o}
Let $G$ be a group and $a,b\in G$. If $ab=ba$ then there exists an element $c\in G$ such that $o(c)=lcm(o(a),o(b))$.
\end{lemma}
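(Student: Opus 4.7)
The plan is to prove this via the classical "coprime splitting" of the lcm. Let $m = o(a)$ and $n = o(b)$, and set $L = \mathrm{lcm}(m,n)$. I want to produce a single element of order $L$ out of suitable commuting powers of $a$ and $b$ whose orders multiply to $L$ and are coprime.

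First, I would invoke the number-theoretic fact that $L$ admits a factorization $L = m_1 n_1$ with $m_1 \mid m$, $n_1 \mid n$, and $\gcd(m_1, n_1) = 1$. This is immediate from prime factorization: writing $m = \prod p_i^{\alpha_i}$ and $n = \prod p_i^{\beta_i}$, define $m_1 = \prod_{\alpha_i \geq \beta_i} p_i^{\alpha_i}$ and $n_1 = \prod_{\alpha_i < \beta_i} p_i^{\beta_i}$; then $m_1 n_1 = \prod p_i^{\max(\alpha_i,\beta_i)} = L$, the two products are over disjoint sets of primes, and each divides the corresponding original order.

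Next I set $u = a^{m/m_1}$ and $v = b^{n/n_1}$, so that $o(u) = m_1$ and $o(v) = n_1$. Because $a$ and $b$ commute, so do $u$ and $v$. I then take $c = uv$ and claim $o(c) = m_1 n_1 = L$. The upper bound is trivial from commutativity: $c^{m_1 n_1} = u^{m_1 n_1} v^{m_1 n_1} = e$. For the lower bound, suppose $c^k = e$; then $u^k = v^{-k}$, so this common element has order dividing both $m_1$ and $n_1$, hence order $1$ by coprimality. Thus $u^k = v^k = e$, forcing $m_1 \mid k$ and $n_1 \mid k$, and again by coprimality $m_1 n_1 \mid k$.

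There is no real obstacle here; the only ingredient beyond elementary group theory is the coprime splitting of the lcm, and the standard "commuting elements of coprime order multiply their orders" argument does the rest. I would present the proof in essentially the two displayed steps above, being careful to note explicitly that the commutativity of $a$ and $b$ is used both to ensure $uv = vu$ and to justify $c^k = u^k v^k$.
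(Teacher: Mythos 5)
Your proof is correct and complete: the coprime splitting $L=m_1n_1$ with $m_1\mid m$, $n_1\mid n$, $\gcd(m_1,n_1)=1$ is valid, and the argument that commuting elements of coprime orders $m_1$ and $n_1$ have product of order $m_1n_1$ is carried out carefully (including the key step $u^k=v^{-k}$ forcing both to be trivial). The paper itself states this lemma without proof, labelling it ``straight forward,'' and what you have written is precisely the standard argument that justification implicitly appeals to, so there is no divergence to report.
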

We denote the symmetric group of degree $n$ and the alternating group of degree $n$ by $S_n$ and $A_n$, respectively. Any permutation $\sigma$ of $S_n$ can be  expressed uniquely as a product of disjoint cycles(except the order in which the cycles are written) and the order of $\sigma$ is the $lcm$ of the lengths of the disjoint cycles in it's decomposition. So, if $p$ is a prime  and $p^i$ divides $o(\sigma)$ for some positive integer $i$, then there is a cycle in the disjoint cyclic decomposition of $\sigma$ whose length is a multiple of $p^i$. For $\tau\in A_n$, the disjoint cyclic decomposition of $\tau$  consists of  odd cycles(if any) and even number(may be zero) of even cycles.

A group $G$ is said to be \emph{nilpotent} if its lower central series $G=G_1\geq G_2\geq G_3\geq \ldots $ terminates at $\{e\}$ after a finite number of steps, where $G_{i+1}=[G_i,G]$ for $i\geq 1.$ The group $G$ being nilpotent is equivalent to any of the following statements:
\begin{enumerate}
\item[(a)] Every Sylow $p$-subgroup of $G$ is normal (equivalently, there is a unique Sylow $p$-subgroup of $G$ for every prime divisor $p$ of $|G|$).
\item[(b)] $G$ is isomorphic to the direct product of its Sylow $p$-subgroups.
\item[(c)] For $x,y\in G$, $x$ and $y$ commute whenever $o(x)$ and $o(y)$ are relatively prime.
\end{enumerate} 

The following results characterize the finite group $G$ for which $\mathcal{P}(G)= \mathcal{P}_e(G)$.
\begin{theorem}\label{p=e}
(\cite{AA}, Theorem 28) Let $G$ be a finite group. Then the following are equivalent:
\begin{enumerate}
\item[$(i)$] $\mathcal{P}(G)= \mathcal{P}_e(G)$;
\item[$(ii)$] every cyclic subgroup of $G$ has prime power order.
\end{enumerate}
\end{theorem}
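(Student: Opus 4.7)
The plan is to prove the two implications separately, after first noting that the containment $\mathcal{P}(G) \subseteq \mathcal{P}_e(G)$ is automatic (if $y = x^k$ then both $x$ and $y$ lie in the cyclic subgroup $\langle x\rangle$). Thus the question reduces entirely to when every $\mathcal{P}_e(G)$-edge is already a $\mathcal{P}(G)$-edge.

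For the implication $(ii) \Rightarrow (i)$, I would take an arbitrary edge $\{x,y\}$ of $\mathcal{P}_e(G)$, so there exists a cyclic subgroup $\langle z\rangle$ of $G$ containing both $x$ and $y$. By hypothesis $|\langle z\rangle|$ is a prime power $p^k$, and the key structural fact is that a cyclic $p$-group has its subgroup lattice totally ordered by inclusion. Applying this to $\langle x\rangle$ and $\langle y\rangle$ (both subgroups of $\langle z\rangle$) gives $\langle x\rangle \subseteq \langle y\rangle$ or vice versa, which means one of $x,y$ is a power of the other, so $\{x,y\}$ is an edge of $\mathcal{P}(G)$.

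For the implication $(i) \Rightarrow (ii)$, I would argue by contrapositive: assuming some cyclic subgroup of $G$ has order divisible by two distinct primes, I would exhibit an edge of $\mathcal{P}_e(G)$ that is not in $\mathcal{P}(G)$. Concretely, pick $z \in G$ with $|z| = n$ where $n$ has distinct prime divisors $p$ and $q$, and set $x = z^{n/p}$, $y = z^{n/q}$, so $o(x)=p$, $o(y)=q$, and $\{x,y\}$ is an edge of $\mathcal{P}_e(G)$ since both lie in $\langle z\rangle$. Any positive power of $x$ has order $1$ or $p$, and any positive power of $y$ has order $1$ or $q$; since $p \neq q$ and both are $>1$, neither $x$ nor $y$ can be a power of the other, so $\{x,y\}$ is not an edge of $\mathcal{P}(G)$.

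Neither direction contains a genuinely hard step; the only thing one must recognise is the chain structure of subgroups of a cyclic $p$-group in the first direction and the coprime-order witness $z^{n/p}, z^{n/q}$ in the second. So the main (mild) obstacle is simply locating the right pair of witnesses inside the bad cyclic subgroup, and otherwise the argument is a direct unpacking of the two definitions.
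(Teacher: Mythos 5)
The paper does not prove this statement at all: it is quoted as Theorem 28 of the cited reference \cite{AA} and used as a black box, so there is no internal proof to compare against. Your argument is correct and is the standard one — the chain structure of subgroups of a cyclic $p$-group handles $(ii)\Rightarrow(i)$, and the witnesses $z^{n/p}$, $z^{n/q}$ of coprime prime orders inside a cyclic subgroup of non-prime-power order give the contrapositive of $(i)\Rightarrow(ii)$ — so it stands as a valid self-contained proof of the cited result.
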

It is fascinating to know the groups for which the above considered super graphs are complete. Clearly the graph $\Delta(G)$ is complete if and only if $G$ is abelian. The following result is useful where the authors have characterized the groups for which the remaining super graphs are complete. 
\begin{theorem}\label{com}(\cite{CG},Theorem 2.12; \cite{BB},Theorem 2.4; \cite{AC}, Theorem 4)\\ 
Let $G$ be a finite group. Then 
\begin{enumerate}
\item[$(i)$] $\mathcal{P}(G)$ is complete if and only if $G$ is a cyclic $p$-group for some prime $p$.
\item[$(ii)$] $\mathcal{P}_e(G)$ is complete if and only if $G$ is cyclic.
\item[$(iii)$] $\mathcal{P}^c(G)$ is complete if and only if $G$ is a cyclic $p$-group for some prime $p$.
\item[$(iv)$] $\mathcal{P}^o(G)$  is complete if and only if $G$ is a $p$-group for some prime $p$.
\item[$(v)$] $\mathcal{P}_e^c(G)$ is complete if and only if $G$ is cyclic.
\item[$(vi)$] $\Delta^0(G)$ is complete if and only if $G$ contains an element whose order is equal to the exponent of a group $G$.
\item[$(vii)$] $\Delta^c(G)$ is complete if and only if $G$ is abelian.
\end{enumerate}
\end{theorem}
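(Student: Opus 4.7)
The plan is to verify each of the seven characterizations separately, since they involve different super-graph frameworks, while exploiting a small handful of recurring ideas. In every case the ``if'' direction is direct from definitions: when $G$ is cyclic, a cyclic $p$-group, a $p$-group, or simply contains an element of order equal to the exponent, one can exhibit explicit witnesses $g',h'$ in the prescribed equivalence classes that are adjacent in the underlying $A$-graph, using that $\langle x\rangle$ contains a unique subgroup of each order dividing $|x|$. The content is in the ``only if'' direction.

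For parts (i), (ii), (iii), and (v) the common tool is the classical union-of-conjugates theorem: a finite group is not the union of the conjugates of any proper subgroup. Letting $x$ be an element of maximum order $m$, completeness of $\mathcal{P}(G)$ forces every $y\in G$ to satisfy $\langle y\rangle\subseteq\langle x\rangle$ or $\langle x\rangle\subseteq\langle y\rangle$; maximality of $m$ yields $G\subseteq\langle x\rangle$, and then the existence of two distinct prime divisors of $|G|$ would produce incomparable orders, so $|G|$ is a prime power, giving (i). For (ii), completeness of $\mathcal{P}_e(G)$ places $x$ and any $y$ inside a common cyclic subgroup $\langle c\rangle$ with $m\mid|c|$ and $|c|\le m$, so $\langle c\rangle=\langle x\rangle$ and $G=\langle x\rangle$. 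The super versions (iii) and (v) follow along the same lines, but now every $y$ merely has a \emph{conjugate} in $\langle x\rangle$, so $G$ equals the union of conjugates of $\langle x\rangle$, forcing $\langle x\rangle=G$; for (iii) the added power-graph condition on witnesses additionally forces all orders to be powers of a single prime.

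Parts (iv) and (vi) use the order equivalence more directly. For (iv), adjacency in $\mathcal{P}^o(G)$ forces $o(g)\mid o(h)$ or $o(h)\mid o(g)$ (since the witnesses in $\mathcal{P}$ have the same orders as $g,h$ and are related by powering); completeness thus makes all element orders powers of a single prime, and Cauchy's theorem then yields a $p$-group. For (vi), the forward direction uses Lemma~\ref{l=o} iteratively: completeness of $\Delta^o(G)$ together with Lemma~\ref{l=o} produces, for any two elements, an element of $G$ whose order is their least common multiple, and iterating this operation over a set of representatives whose lcm is the exponent realizes the exponent as the order of a single element. The converse for (vi) routes all witness pairs through the cyclic subgroup $\langle a\rangle$ where $o(a)$ equals the exponent.

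Finally, (vii) reduces to a single observation: $\Delta^c(G)$ is complete if and only if for every $h\in G$, the centralizer of $h$ meets every conjugacy class of $G$. But if a subgroup $H\le G$ meets every conjugacy class, then $G$ is the union of the conjugates of $H$, forcing $H=G$ by the same union-of-conjugates theorem. Applying this with $H$ equal to the centralizer of an arbitrary $h$ gives $h\in Z(G)$ for every $h$, so $G$ is abelian; the converse is immediate from $\Delta(G)\subseteq\Delta^c(G)$. The main obstacle across the seven parts is keeping the layered ``witnesses inside equivalence classes'' adjacency condition straight, and then recognising that several apparently distinct arguments collapse onto the union-of-conjugates principle, which does the heavy structural work in (iii), (v), and (vii).
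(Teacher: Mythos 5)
The paper does not prove this theorem at all: it is quoted verbatim from the cited sources (\cite{CG}, Theorem 2.12; \cite{BB}, Theorem 2.4; \cite{AC}, Theorem 4), so there is no in-paper argument to compare yours against. On its own merits your proof is correct. The ``if'' directions are routine as you say, using that a cyclic group has a unique subgroup of each order dividing its order, and the containments $\mathcal{P}(G)\subseteq\mathcal{P}^c(G)$, $\Delta(G)\subseteq\Delta^c(G)$ dispose of the converses in (iii), (vii). For the ``only if'' directions, your maximal-order-element argument handles (i) and (ii) without any conjugation input; (iv) is immediate once you observe that adjacency in $\mathcal{P}^o(G)$ forces comparability of orders under divisibility, so Cauchy rules out two prime divisors; (vi) is exactly the iteration of Lemma~\ref{l=o} that the paper itself later repackages as Proposition~\ref{adjacent} (completeness means $K_G$ is closed under lcm, hence contains the exponent). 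The one genuinely external ingredient is the classical fact that a finite group is never the union of the conjugates of a proper subgroup, which correctly finishes (iii), (v) and (vii): in (iii) and (v) it forces $G=\langle x\rangle$ for $x$ of maximal order, after which conjugacy classes are singletons and the statements reduce to (i) and (ii); in (vii) it forces each centralizer $C_G(h)$ to be all of $G$. All seven parts check out, and your route is presumably close to that of the cited references.
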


\section{Graphs Equality}

We begin this section with a classification of a finite group $G$ such that the order super power graph of $G$ is equal to the power graph of $G$.
\begin{theorem}
Let $G$ be a finite group. Then the followings are equivalent:
\begin{enumerate}
\item[$(i)$] $\mathcal{P}(G)= \mathcal{P}^o(G)$;
\item[$(ii)$] $\mathcal{P}_e(G)= \mathcal{P}_e^0(G)$;
\item[$(iii)$] $G$ is cyclic.
\end{enumerate}
\end{theorem}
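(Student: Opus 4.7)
The plan is to verify (iii)$\Rightarrow$(i) and (iii)$\Rightarrow$(ii) directly from the structure of cyclic groups, and then to derive both converses simultaneously by producing a single obstruction whenever $G$ is non-cyclic.

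For (iii)$\Rightarrow$(ii), I would apply Theorem~\ref{com}(ii): if $G$ is cyclic then $\mathcal{P}_e(G)$ is complete, so $\mathcal{P}_e^o(G)$, which sits between $\mathcal{P}_e(G)$ and $K_{|G|}$, is also complete, and the two graphs coincide. For (iii)$\Rightarrow$(i), the containment $\mathcal{P}(G)\subseteq \mathcal{P}^o(G)$ holds in general, so only the reverse edge containment needs to be checked. Given $x\sim y$ in $\mathcal{P}^o(G)$, take witnesses $x',y'\in G$ with $o(x')=o(x)$, $o(y')=o(y)$ and (say) $y'$ a power of $x'$. Because a cyclic group has a unique subgroup of each order dividing $|G|$, one has $\langle x\rangle = \langle x'\rangle$ and $\langle y\rangle=\langle y'\rangle$, so $y \in \langle y'\rangle \subseteq \langle x'\rangle = \langle x\rangle$, making $y$ a positive power of $x$ and yielding $x\sim y$ in $\mathcal{P}(G)$.

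For the converses I argue the contrapositive: if $G$ is non-cyclic I produce a pair $x,y\in G$ with $o(x)=o(y)$ but $\langle x\rangle \neq \langle y\rangle$. Such a pair cannot lie in any common cyclic subgroup $C$, because $C$ would then contain two distinct subgroups of order $o(x)$, contradicting uniqueness of subgroups of each order in a cyclic group. Hence $x\not\sim y$ in $\mathcal{P}_e(G)$, and therefore also in the subgraph $\mathcal{P}(G)$; yet the super-graph convention places $x$ and $y$ in the same order equivalence class and so makes them adjacent in both $\mathcal{P}^o(G)$ and $\mathcal{P}_e^o(G)$. This exhibits the required inequality for both pairs of graphs simultaneously.

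The main obstacle is producing such a pair $(x,y)$ in an arbitrary non-cyclic group. I would handle this by cases. If some Sylow $p$-subgroup of $G$ is non-cyclic, then being a non-cyclic $p$-group it contains at least two subgroups of order $p$, and generators of two such subgroups give the required pair. Otherwise every Sylow subgroup of $G$ is cyclic; since $G$ is non-cyclic it cannot be nilpotent (a nilpotent group whose Sylows are cyclic of coprime orders is itself cyclic), so some Sylow $p$-subgroup $P$ admits a conjugate $P'=gPg^{-1}\neq P$, whereupon a generator $x$ of $P$ together with $y=gxg^{-1}$ furnishes the pair. Alternatively, one may appeal to the classical theorem that $G$ is cyclic iff $|\{g\in G : g^d=e\}|\le d$ for every $d\mid |G|$, from which the required pair follows by a counting argument. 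Everything else in the proof is a routine unpacking of definitions.
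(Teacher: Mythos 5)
Your forward implications (iii)$\Rightarrow$(i),(ii) match the paper's, but your treatment of the converses is genuinely different. The paper proves (i)$\Rightarrow$(iii) structurally: from $\mathcal{P}(G)=\mathcal{P}^o(G)$ it deduces that every Sylow $p$-subgroup is cyclic (comparing the completeness criteria in Theorem~\ref{com}(i) and (iv)) and normal (since $gxg^{-1}\sim x$ in $\mathcal{P}^o(G)$ forces $gxg^{-1}\in\langle x\rangle$), whence $G$ is nilpotent with cyclic Sylow subgroups of coprime orders and hence cyclic; the implication (ii)$\Rightarrow$(iii) is handled analogously. You instead prove both converses at once by the contrapositive, exhibiting in any non-cyclic $G$ a pair $x,y$ with $o(x)=o(y)$ and $\langle x\rangle\neq\langle y\rangle$: such a pair is non-adjacent already in $\mathcal{P}_e(G)$ (hence in $\mathcal{P}(G)$) but adjacent in both order super graphs by the same-class convention. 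This is a clean unification and arguably more elementary, resting ultimately on the classical fact that a finite group with at most one cyclic subgroup of each order is cyclic.

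There is, however, one step that fails as stated. In your Case 1 you claim that a non-cyclic $p$-group contains at least two subgroups of order $p$. This is false for $p=2$: the generalized quaternion groups $Q_{2^m}$ are non-cyclic $2$-groups with a \emph{unique} subgroup of order $2$ (the theorem you are implicitly using holds only for odd $p$; for $p=2$ the quaternion groups are the exceptional case). The desired pair still exists in these groups --- e.g.\ $\langle i\rangle$ and $\langle j\rangle$ are distinct cyclic subgroups of order $4$ in $Q_8$ --- but your stated justification does not produce it. The fix is either to invoke the counting argument you mention at the end ($|G|=\sum_{d}\#\{g:o(g)=d\}\le\sum_{d\mid |G|}\varphi(d)=|G|$ forces an element of order $|G|$ if every order admits at most one cyclic subgroup), which covers all cases uniformly and makes the case division unnecessary, or to add the quaternion case explicitly. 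With that repair the argument is complete.
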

\begin{proof} $(i)\Rightarrow (iii)$ Suppose $\mathcal{P}(G)= \mathcal{P}^o(G)$. Let $p$ be a prime such that $p\mid o(G).$  Let $H$ be a Sylow $p$-subgroup of $G$.\\
{\bf Claim:} $H$ is cyclic and normal.\\
Since $H$ is a $p$-group, by Theorem \ref{com}(iv), the induced subgraph $\mathcal{P}^o(H)$ of $H$ in $\mathcal{P}^o(G)$ is complete. So the induced subgraph $\mathcal{P}(H)$ of $H$ in $\mathcal{P}(G)$ is complete as $\mathcal{P}(G)= \mathcal{P}^o(G)$. By Theorem \ref{com}(i), $H$ is cyclic.\\
Now let $H=\langle x \rangle$ for some $x\in G.$ For $g\in G$, we have $o(gxg^{-1})=o(x)$ and so $gxg^{-1} \sim x$ in $\mathcal{P}^o(G)=\mathcal{P}(G)$. By definition of power graph, we have $x\in \langle gxg^{-1} \rangle$ or $gxg^{-1}\in \langle x \rangle$. Since $o(gxg^{-1})=o(x)$, we have $gxg^{-1} \in H$ and hence $H$ is normal in $G$.\\
Thus every Sylow $p$-subgroup of $G$ is cyclic and normal. So $G$ is nilpotent and isomorphic to direct product of its cyclic Sylow $p$-subgroups. Hence $G$ is cyclic.\\

$(iii)\Rightarrow (i)$ Suppose $G$ is cyclic. Let $x,y\in G$ such that $x \sim y$ in $\mathcal{P}^o(G)$. If $o(x)=o(y)$ then $x \sim y$ in $\mathcal{P}(G)$ as $G$ is cyclic. If $o(x)\neq o(y)$ then there exist $x' \in [x]_{\rho_o}$ and $y' \in [y]_{\rho_o}$ such that $x' \sim y'$ in $\mathcal{P}(G)$. So, we have $x' \in \langle y' \rangle$ or $y'  \in \langle x' \rangle$. Since $o(x') = o(x)$, $o(y') = o(y)$ and $G$ is cyclic we get $x \in \langle y \rangle$ or $y \in \langle x \rangle$. Thus $x \sim y$ in $\mathcal{P}(G)$ and so $\mathcal{P}^o(G)\subseteq \mathcal{P}(G).$ As $\mathcal{P}(G)\subseteq \mathcal{P}^o(G)$, we have $\mathcal{P}(G)= \mathcal{P}^o(G)$.\\

$(ii)\Rightarrow (iii)$  Similar to the proof of $(i)\Rightarrow (iii)$.\\

$(iii)\Rightarrow (ii)$ Suppose $G$ is cyclic. Then by Theorem \ref{com}(ii), $\mathcal{P}_e(G)$ is complete. Hence $\mathcal{P}_e(G)= \mathcal{P}_e^0(G)$ as $\mathcal{P}_e(G)\subseteq \mathcal{P}_e^0(G).$  
\end{proof}

\begin{theorem}\label{OSPG=OSEPG}
Let $G$ be a finite group. Then the following statements are equivalent:
\begin{enumerate}
\item[$(i)$] $\mathcal{P}^{o}(G) = \mathcal{P}_e^{o}(G)$;
\item[$(ii)$] $\mathcal{P}^{c}(G) = \mathcal{P}_e^{c}(G)$;
\item[$(iii)$] every cyclic subgroup of $G$ has prime power order.  
\end{enumerate}
\end{theorem}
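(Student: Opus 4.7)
The plan is to pivot off Theorem \ref{p=e}, which already settles the base case $\mathcal{P}(G) = \mathcal{P}_e(G)$. The key observation is that both equivalence relations $\rho_o$ and $\rho_c$ preserve element orders, so adjacency in $\mathcal{P}_e^o(G)$ or $\mathcal{P}_e^c(G)$ always comes from witnesses whose orders equal those of the original vertices, and the same is true for the power-graph super versions.

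First I would dispatch (iii) $\Rightarrow$ (i) and (iii) $\Rightarrow$ (ii) simultaneously. Assuming every cyclic subgroup of $G$ has prime power order, Theorem \ref{p=e} gives $\mathcal{P}(G) = \mathcal{P}_e(G)$. The containments $\mathcal{P}^o(G) \subseteq \mathcal{P}_e^o(G)$ and $\mathcal{P}^c(G) \subseteq \mathcal{P}_e^c(G)$ are recorded in the introduction. For the reverse inclusion, if $x \sim y$ in $\mathcal{P}_e^o(G)$ then by definition there are $x' \in [x]_{\rho_o}$ and $y' \in [y]_{\rho_o}$ adjacent in $\mathcal{P}_e(G) = \mathcal{P}(G)$, and this immediately gives $x \sim y$ in $\mathcal{P}^o(G)$. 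The identical one-line argument with $\rho_c$ in place of $\rho_o$ yields (iii) $\Rightarrow$ (ii).

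For the converses (i) $\Rightarrow$ (iii) and (ii) $\Rightarrow$ (iii), I would argue by contrapositive, and handle both cases with one witness. Suppose (iii) fails, so some $g \in G$ has order $n = ab$ with $a, b > 1$ and $\gcd(a,b) = 1$. Set $x := g^b$ and $y := g^a$, so $o(x) = a$, $o(y) = b$, and $x, y \in \langle g \rangle$. Then $x \sim y$ in $\mathcal{P}_e(G)$, hence a fortiori in $\mathcal{P}_e^c(G)$ and in $\mathcal{P}_e^o(G)$. It suffices to rule out $x \sim y$ in $\mathcal{P}^o(G)$, because this non-adjacency passes to the smaller graph $\mathcal{P}^c(G)$ via the containment $\mathcal{P}^c(G) \subseteq \mathcal{P}^o(G)$, and so refutes both (i) and (ii) at once.

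The crux is this non-adjacency. If $x \sim y$ in $\mathcal{P}^o(G)$, there exist $x'$ with $o(x') = a$ and $y'$ with $o(y') = b$ such that $x' \in \langle y' \rangle$ or $y' \in \langle x' \rangle$ in $\mathcal{P}(G)$. Either possibility forces $a \mid b$ or $b \mid a$, which is incompatible with $\gcd(a,b) = 1$ and $a, b > 1$. I do not anticipate a real obstacle: the substance is entirely absorbed into Theorem \ref{p=e}, and the only genuine new idea is choosing the coprime-order commuting pair $(g^b, g^a)$ inside a cyclic subgroup of non-prime-power order, for which order-preserving equivalences cannot manufacture a divisibility relation that was not already there.
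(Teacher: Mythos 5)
Your proposal is correct and follows essentially the same route as the paper: the forward direction leans on Theorem \ref{p=e} exactly as the paper does, and your contrapositive argument with the coprime-order pair $(g^b, g^a)$ inside a non-prime-power cyclic subgroup is the same witness the paper uses (it takes the special case of two distinct primes $p,q$ via an element of order $pq$). Your unification of (i)~$\Rightarrow$~(iii) and (ii)~$\Rightarrow$~(iii) through the containment $\mathcal{P}^c(G) \subseteq \mathcal{P}^o(G)$ is a tidy packaging of what the paper dismisses as ``similar,'' but it is not a different argument.
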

\begin{proof} $(i)\Rightarrow (iii)$ Suppose $\mathcal{P}^{o}(G) = \mathcal{P}_e^{o}(G)$. Let $H$ be a cyclic subgroup of $G$. Assume that there exist two distinct primes $p$ and $q$ such that $pq$ divides $|H|$. This implies $H$ contains a unique cyclic subgroup $H'$ of order $pq$. Let $H'=\langle z \rangle$ for some $z\in H$. Then $z^p \sim z^q$ in $\mathcal{P}_e(G)$ as $z^p $ and $z^q$ are contained in the cyclic group $H'$. So  $z^p \sim z^q$ in $\mathcal{P}_e^{o}(G) = \mathcal{P}^{o}(G)$. Therefore, there exist $x \in [z^q]_{_{\rho_o}}$ and $y \in [z^p]_{_{\rho_o}}$ such that $x \sim y$ in $\mathcal{P}(G)$ which is not possible as $o(x) = p$ and $o(y) = q$. Hence $H$ is of prime power order.\\

$(iii)\Rightarrow (i)$ Suppose every cyclic subgroup of $G$ has prime power order. Let $x,y\in G$ such that $x \sim y$ in $\mathcal{P}_e^o(G)$. If $o(x)=o(y)$ then $x \sim y$ in $\mathcal{P}^o(G)$. If $o(x)\neq o(y)$ then there exist $x' \in [x]_{_{\rho_o}}$ and $y' \in [y]_{_{\rho_o}}$ such that $x' \sim y'$ in $\mathcal{P}_e(G)$ gives $x', y' \in  \langle a \rangle$ for some $a\in G$. By Theorem \ref{p=e}, $\mathcal{P}_e(G)= \mathcal{P}(G)$ and so $x' \sim y'$ in $\mathcal{P}(G)$. Thus $x \sim y$ in $\mathcal{P}^o(G)$ and so $\mathcal{P}_e^o(G)\subseteq \mathcal{P}^o(G).$ As $\mathcal{P}^o(G)\subseteq \mathcal{P}_e^o(G)$, we have $\mathcal{P}(G)= \mathcal{P}^o(G)$.\\

$(ii)\Rightarrow (iii)$ Similar to the proof of $(i)\Rightarrow (iii)$.\\

$(iii)\Rightarrow (ii)$ Suppose every cyclic subgroup of $G$ has prime power order. Then by Theorem \ref{p=e}, $\mathcal{P}_e(G)= \mathcal{P}(G)$ and hence $\mathcal{P}^{c}(G) = \mathcal{P}_e^{c}(G)$.
\end{proof}
Since $\mathcal{P}_e^{o}(G)=\Delta^o(G)$ for any group $G$, we have the following corollary.
\begin{corollary} For a finite group $G$, the following are equivalent:
\begin{enumerate}
\item[$(i)$] $\mathcal{P}^{o}(G)=\Delta^o(G)$;
\item[$(ii)$] every cyclic subgroup of $G$ has prime power order.
\end{enumerate}
\end{corollary}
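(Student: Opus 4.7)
The plan is to obtain this corollary essentially for free from Theorem \ref{OSPG=OSEPG} combined with the identity $\mathcal{P}_e^{o}(G)=\Delta^o(G)$ that holds for every finite group $G$ (stated in the paper, from \cite{AC}, Theorem 1). Since the right-hand sides of the two equalities $\mathcal{P}^o(G)=\mathcal{P}_e^o(G)$ and $\mathcal{P}^o(G)=\Delta^o(G)$ are literally the same graph, the two equalities are one and the same statement about $G$.

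So the entire proof reduces to a substitution. First, I would note that for any finite group $G$, by the cited result, $\mathcal{P}_e^{o}(G)=\Delta^o(G)$. Therefore the condition $\mathcal{P}^{o}(G)=\Delta^o(G)$ in (i) is equivalent to $\mathcal{P}^{o}(G)=\mathcal{P}_e^o(G)$. Then I would invoke the equivalence (i)$\Leftrightarrow$(iii) of Theorem \ref{OSPG=OSEPG}, which asserts that $\mathcal{P}^{o}(G)=\mathcal{P}_e^o(G)$ holds if and only if every cyclic subgroup of $G$ has prime power order. Chaining these two equivalences yields the desired biconditional.

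There is no real obstacle here: the work has already been done in Theorem \ref{OSPG=OSEPG}. The only thing to be careful about is simply to cite the right pieces in the right order and not to reprove either direction of the underlying theorem. A clean two-sentence proof of the form ``By \cite{AC}, Theorem 1, $\mathcal{P}_e^o(G)=\Delta^o(G)$; hence the equivalence follows from Theorem \ref{OSPG=OSEPG}'' suffices, and that is exactly what I would write.
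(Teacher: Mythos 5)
Your proposal is correct and is exactly the paper's argument: the corollary is stated immediately after the remark that $\mathcal{P}_e^{o}(G)=\Delta^o(G)$ for any group $G$, so (i) is literally the condition $\mathcal{P}^{o}(G)=\mathcal{P}_e^{o}(G)$, and the equivalence with (ii) is the content of Theorem \ref{OSPG=OSEPG}. Nothing further is needed.
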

Now we classify the finite group $G$ such that its order super commuting graph and commuting graph are equal.

\begin{theorem}\label{o_com_eq_com}
Let $G$ be a finite group. Then the followings are equivalent:
\begin{enumerate}
\item[$(i)$] $\Delta(G)=\Delta^o(G)$;
\item[$(ii)$] $G$ is an abelian.
\end{enumerate}
\end{theorem}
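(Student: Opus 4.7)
My plan splits as usual into two implications. The direction $(ii)\Rightarrow(i)$ is immediate from the containment $\Delta(G)\subseteq \Delta^o(G)$ recalled in Section 1: if $G$ is abelian then $\Delta(G)$ is the complete graph on vertex set $G$, which forces $\Delta^o(G)$ to be complete as well, so the two graphs coincide.

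For $(i)\Rightarrow(ii)$ the plan is to assume $\Delta(G)=\Delta^o(G)$ and first establish a single key lemma: whenever $o(y)\mid o(x)$, the elements $x$ and $y$ commute. The witness is the element $z:=x^{o(x)/o(y)}$, which has order $o(y)$ (so $z\in[y]_{\rho_o}$) and visibly commutes with $x$; hence the pair $(x,z)$ certifies the edge $x\sim y$ in $\Delta^o(G)$, and by hypothesis this is also an edge of $\Delta(G)$, meaning $xy=yx$.

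From this lemma two consequences are to be extracted in order. First, for every prime $p$, any two $p$-elements of $G$ have orders $p^a,p^b$ with one dividing the other, so they commute; therefore the set of all $p$-elements forms an abelian subgroup of $G$, which must then coincide with the unique (hence normal) Sylow $p$-subgroup of $G$. Second, since every Sylow subgroup of $G$ is normal and abelian, the nilpotency characterisations recalled in Section 2 identify $G$ as the internal direct product of its abelian Sylow subgroups, so $G$ itself is abelian.

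The only genuinely novel move in the whole argument is the choice of witness $z=x^{o(x)/o(y)}$ to realise the order of $y$ inside $\langle x\rangle$; once the divisibility case of commutation is in hand, the rest is routine Sylow and nilpotency theory, and I do not anticipate a real obstacle there.
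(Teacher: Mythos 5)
Your proposal is correct and follows essentially the same route as the paper: the divisibility-of-orders witness $z=x^{o(x)/o(y)}\in\langle x\rangle$ forcing $xy=yx$ is exactly the paper's key step, and both arguments then conclude that every Sylow subgroup is abelian and normal, hence $G$ is the direct product of abelian Sylow subgroups. Your packaging of the witness argument as a single lemma (``$o(y)\mid o(x)$ implies $xy=yx$'') and the direct observation that the set of all $p$-elements is then an abelian subgroup is a mild streamlining of the paper's two-stage treatment (first $H$ abelian, then $H$ unique), but it is not a different proof.
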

\begin{proof} $(i)\Rightarrow (ii)$ Suppose $\Delta(G)=\Delta^o(G)$. Let $p$ be a prime such that $p \mid o(G)$  and let  $H$ be a Sylow $p$-subgroup of $G$. Suppose $x, y \in H$. If $o(x) = o(y)$ then $x \sim y$ in $\Delta^o(G)=\Delta(G)$. This implies $xy = yx$. Without loss of generality, we assume that $o(x) < o(y)$. As $H$ is a Sylow $p$-subgroup of $G$ and $x,y \in H$, we have $o(x) \mid o(y)$. Then there exists $z \in \langle y \rangle$ such that $o(z) = o(x)$. Since $zy = yz$, we have $x \sim y$ in $\Delta^o(G) = \Delta(G)$. This implies $xy = yx$. Therefore, $H$ is abelian.  

Now let $g \in G$ such that $o(g) = p^t$ for some $t \in \mathbb N$. Let $h\in H$ and $h\neq e$. Then $o(h) = p^l$ for some $l \in \mathbb N$. By using a similar argument as  above, we get $gh = hg$. This implies that $g$ commutes with every element of $H$. Consider $H' = \langle H, g \rangle$ which is an abelian $p$-subgroup of $G$ containing $H$. Since $H$ is a Sylow $p$-subgroup of $G$ so that $H' = H$. Thus $g \in H$ and so $H$ is the unique Sylow $p$-subgroup of $G$. Hence $H$ is normal and isomorphic to the direct product of abelian Sylow $p$-subgroup of $G$. Thus, the result follows.\\

$(i)\Rightarrow (ii)$ Suppose $G$ is abelian. Then $\Delta(G)$ is complete and hence $\Delta(G)=\Delta^o(G)$.  
\end{proof}

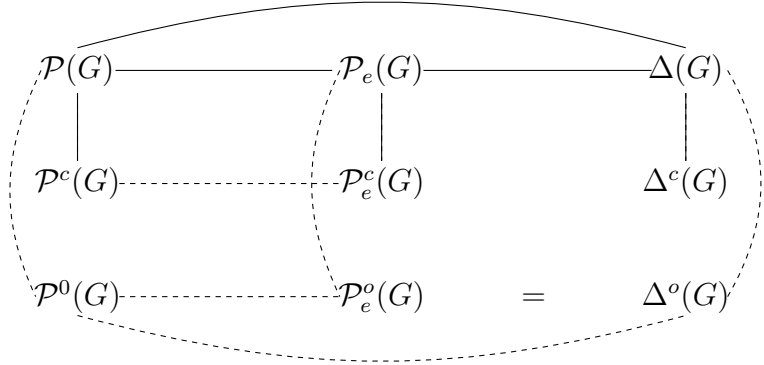
\begin{figure}[h!]
\begin{center}
\begin{tikzpicture}[scale=1.0]
\draw  (-2.75,1.5)--(0.1,1.5);
\draw (1.3,1.5)--(4.3,1.5);
\draw  (-3.25,1.5) node{$\mathcal{P}(G)$} (0.75,1.5)node {$\mathcal{P}_e(G)$} (4.75,1.5) node{$\Delta(G)$};

\draw  [dash pattern=on2pt off 2pt](-2.7,0)--(0.2,0);
\draw  (-3.25,0) node{$\mathcal{P}^c(G)$} (0.75,0)node {$\mathcal{P}_e^c(G)$} (4.75,0) node{$\Delta^c(G)$};

\draw  [dash pattern=on2pt off 2pt](-2.7,-1.5)--(0.2,-1.5);
\draw  (-3.25,-1.5) node{$\mathcal{P}^0(G)$} (0.75,-1.5)node {$\mathcal{P}_e^o(G)$} (4.75,-1.5) node{$\Delta^o(G)$};

\draw[dash pattern=on2pt off 2pt] (-3.25,1.2)--(-3.25,.3);
\draw (-3.25,1.2)--(-3.25,.3);

\draw(.75,1.2)--(.75,.3);
\draw[dash pattern=on2pt off 2pt] (.75,1.2)--(.75,.3);

\draw (4.75,1.2)--(4.75,.3);
\draw[dash pattern=on2pt off 2pt] (4.75,1.2)--(4.75,.3);

\draw (-3.25,1.8) to [out=15,in=165] (4.75,1.8);
\draw [dash pattern=on2pt off 2pt](-3.73,1.5) to [out=-115,in=115] (-3.8,-1.5);
\draw [dash pattern=on2pt off 2pt](.2,1.5) to [out=-115,in=115] (.2,-1.5);
\draw [dash pattern=on2pt off 2pt](5.3,1.5) to [out=-60,in=60] (5.3,-1.5);
\draw [dash pattern=on2pt off 2pt](-3.25,-1.75) to [out=-15,in=-165] (4.75,-1.75);
\draw (2.75,-1.53)node{$=$};
\end{tikzpicture}
\caption{Containment and equality relations in super graphs}
\end{center}
\end{figure}

Figure 1 captures the containment and equality relations between the nine super graphs considered in this article. In this Figure, the nine super graphs are placed in three rows and three columns. Each row and each column consists of three super graphs and clearly there is a containment relation between them.  By the equality symbol between  the two graphs in the third row, we mean,  these two graphs are equal  (see \cite{AC})  for any group $G$. A line between a pair of graphs means, the groups are characterized for which the pair of graphs are equal (see  \cite{AA}, \cite{AC}). By a dotted line between a pair of graphs means, in this section we have characterized the groups for which the pair of graphs are equal. For the rest five pairs of graphs(two in second row and a pair in each column), it is interesting to know the groups for which these pairs of graphs are equal.

\section{Dominant vertex and reduced super  graphs}

The dominant vertices of graphs are important in the study of their structures. For the above super graphs, clearly $e$ is a dominant vertex and hence the diameter of these graphs are at most $2$. The graph obtained by removing the dominant vertices from a super graph $\Gamma$ is called a reduced super graph and we denote it by $\Gamma^*$. It is interesting to know the connectedness and diameter of the reduced super graphs. 

Understandably the set of dominant vertices $_d\Delta(G)$ of $\Delta(G)$ is equal to $Z(G)$, where $Z(G)$ is the center of $G$. For the graphs $\mathcal{P}(G)$ and $\mathcal{P}_e(G)$, the dominant vertices are characterized in \cite{Ca}(see Theorem 9.1). The dominant vertices of the conjugacy super $A$ graph is same as the dominant vertices of the graph $A$, where $A\in \{ \mathcal{P}(G),\mathcal{P}_e(G), \Delta(G)\}$ (see \cite{AC}, Theorem 5). For the order super power graph of $G$, the set $_d\mathcal{P}(G)$ is also characterized(see \cite{AC}, Proposition 2). Here we first characterize the dominant vertices of $\Delta^o(G)$ and  using the same obtain the sets $_d\Delta^o(S_n)$ and $_d\Delta^o(A_n)$. 

Let $K_G=\{d : o(x)=d\; \mbox{for some} \; x\in G\}$. Then $K_G$ is a partial order set with respect to division relation and $1$ is the minimal element of $K_G$. Let $\mu_G=\{d\in K_G: d\; \mbox{ is maximal}\}$. It is easy to check that $|\mu_G|=1$ if and only if $G$ has an element whose order is exponent of $G$. We have the following proposition which relates the adjacency of two vertices in $\Delta^o(G)$ and $K_G$.

\begin{proposition}\label{adjacent}
Let $G$ be a group and $x, y \in G$. Let $d = lcm (o(x), o(y))$. Then $x\sim y$ in $\Delta^o(G)$ if and only if  $d \in K_G.$
\end{proposition}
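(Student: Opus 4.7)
The plan is to prove both implications directly, using Lemma \ref{l=o} for the forward direction and exploiting the cyclic subgroup $\langle z\rangle$ generated by an element of order $d$ for the converse. I assume throughout that $x$ and $y$ are distinct (otherwise adjacency is not defined in the simple graph).

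For the forward direction, suppose $x\sim y$ in $\Delta^o(G)$. By definition of the order super commuting graph, either $x$ and $y$ lie in the same $\rho_o$-class (so $o(x)=o(y)$, which immediately gives $d=o(x)\in K_G$), or there exist $x'\in [x]_{\rho_o}$ and $y'\in [y]_{\rho_o}$ with $x'y'=y'x'$. In the latter case, since $x'$ and $y'$ commute, Lemma \ref{l=o} supplies an element $c\in G$ with $o(c)=\mathrm{lcm}(o(x'),o(y'))$. Since $o(x')=o(x)$ and $o(y')=o(y)$ by the choice of representatives, we get $o(c)=d$, hence $d\in K_G$.

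For the converse, suppose $d\in K_G$ and pick $z\in G$ with $o(z)=d$. Since $o(x)\mid d$ and $o(y)\mid d$, set
\[
x'=z^{d/o(x)}, \qquad y'=z^{d/o(y)}.
\]
A standard computation gives $o(x')=d/\gcd(d,d/o(x))=o(x)$ and similarly $o(y')=o(y)$, so $x'\in[x]_{\rho_o}$ and $y'\in[y]_{\rho_o}$. Since $x',y'\in\langle z\rangle$, they commute. If $x'\ne y'$, then $x'\sim y'$ in $\Delta(G)$, which by the definition of the super graph forces $x\sim y$ in $\Delta^o(G)$. If instead $x'=y'$, then the exponents $d/o(x)$ and $d/o(y)$ must agree, forcing $o(x)=o(y)$; but then $x$ and $y$ lie in the same $\rho_o$-class and so $x\sim y$ in $\Delta^o(G)$ by the completeness of each equivalence class.

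There is no real obstacle here: the content of the proposition is essentially Lemma \ref{l=o} combined with the observation that inside a cyclic group of order $d$ one can realise any divisor of $d$ as an element's order. The only delicate points are handling the degenerate cases $o(x)=o(y)$ (which uses the completeness-of-classes convention) and verifying that the chosen representatives $x',y'$ are distinct when they need to be, both of which are dispatched in a line.
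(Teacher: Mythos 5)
Your proof is correct and follows essentially the same route as the paper's: the forward direction via Lemma \ref{l=o} applied to commuting representatives, and the converse by realising elements of orders $o(x)$ and $o(y)$ inside the cyclic group $\langle z\rangle$ with $o(z)=d$. The only difference is cosmetic: the paper disposes of the case $o(x)=o(y)$ at the outset (which also makes your $x'=y'$ subcase vacuous), whereas you carry the degenerate cases through explicitly.
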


\begin{proof}
If $o(x)=o(y)$ then the result holds trivially. So, assume that $o(x)\neq o(y)$. First suppose that $x \sim y$ in $\Delta^{^o}(G)$. Then there exist $x' \in [x]_{_{\rho_o}}$ and $y' \in [y]_{_{\rho_o}}$ such that $x' \sim y'$ in $\Delta^{^o}(G)$. This implies $ x'y' = y'x'$. Then by Lemma \ref{l=o},  there exists $z \in G$ such that $o(z) = lcm(o(x'), o(y'))= lcm(o(x), o(y))=d$. So $d \in K_G.$

Conversely, let  $d \in K_G$. That means there exist an element $z \in G$ such that $o(z) = d$. As $o(x)\mid d$ and $o(y)\mid d$, then by properties of cyclic group, we get $z_1,z_2 \in  \langle z \rangle$ such that $o(z_1) = o(x)$ and   $o(z_2) = o(y)$. Since $z_1z_2=z_2z_1$, we have $x \sim y$ in $\Delta^o(G)$.
\end{proof}
The following corollary is a consequence of Proposition \ref{adjacent}.
\begin{corollary}\label{c-adjacent}
Let $G$ be a finite group and $x \in G$ with $d = o(x)$. Then $x$ is a dominant vertex  in $\Delta^o(G)$ if and only if  $lcm(d, d') \in K_G$ for all $d' \in K_G$.
\end{corollary}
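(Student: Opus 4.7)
The plan is to observe that this statement is essentially a reformulation of Proposition \ref{adjacent} combined with the definition of a dominant vertex, so the proof should be very short.

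First I would unpack definitions: by definition $x$ is a dominant vertex of $\Delta^o(G)$ if and only if $N[x] = V(\Delta^o(G)) = G$, which (since $x \in N[x]$ always) amounts to saying that $x \sim y$ in $\Delta^o(G)$ for every $y \in G \setminus \{x\}$. Then I would apply Proposition \ref{adjacent} to rewrite this adjacency condition: $x \sim y$ if and only if $\mathrm{lcm}(o(x), o(y)) = \mathrm{lcm}(d, o(y)) \in K_G$. So $x$ is dominant precisely when $\mathrm{lcm}(d, o(y)) \in K_G$ for every $y \in G$.

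The last step is to see that quantifying over $y \in G$ is the same as quantifying over $d' \in K_G$: by the very definition of $K_G = \{o(y) : y \in G\}$, the set $\{o(y) : y \in G\}$ equals $K_G$, so the universal condition ``$\mathrm{lcm}(d, o(y)) \in K_G$ for all $y \in G$'' is equivalent to ``$\mathrm{lcm}(d, d') \in K_G$ for all $d' \in K_G$''. I would also remark that the case $d' = d$ (which corresponds to choosing $y = x$ or any element of the same order as $x$) gives $\mathrm{lcm}(d,d) = d \in K_G$ automatically, so this case is vacuous.

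There is essentially no obstacle here; the only point worth stating carefully is the bijection between ``$y$ ranges over $G$'' and ``$d'$ ranges over $K_G$'' via the order function, together with the fact that Proposition \ref{adjacent} was proved \emph{without} the assumption $x \neq y$ when $o(x) = o(y)$ (the trivial case handled at the start of its proof), so no separate argument for $y = x$ is required.
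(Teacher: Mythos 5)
Your proof is correct and matches the paper's approach: the paper states this corollary without proof as an immediate consequence of Proposition \ref{adjacent}, and your argument (unpack the definition of dominant vertex, apply Proposition \ref{adjacent} to each $y$, and identify the range of $o(y)$ with $K_G$, noting the case $d'=d$ is automatic) is exactly that intended derivation.
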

Let $\mu_G=\{d_1,d_2,\ldots,d_k\}$. We define $l_G=gcd\{d_1,d_2,\ldots,d_k\}$ if $k\geq 2$ and $l_G=d_1$ if $k=1$. The following is a characterization of the dominant vertices of $\Delta^o(G)$.  
\begin{theorem}\label{dominant}
Let $G$ be a group and $x \in G$ with $d = o(x)$. Then $x$ is a dominant vertex  in $\Delta^o(G)$ if and only if $d \mid l_G$.
\end{theorem}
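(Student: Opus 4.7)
The plan is to apply Corollary \ref{c-adjacent} and exploit the basic fact that, in any finite group, if $m \in K_G$ then every divisor of $m$ also lies in $K_G$ (any element of order $m$ generates a cyclic subgroup containing elements of every divisor order). With this observation in hand, the characterization reduces to a short divisibility argument against the maximal orders $d_1,\dots,d_k$.

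For the forward direction, assume $x$ is a dominant vertex of $\Delta^o(G)$. By Corollary \ref{c-adjacent}, $\mathrm{lcm}(d,d') \in K_G$ for every $d' \in K_G$. Apply this in particular to each maximal order $d_i \in \mu_G$: then $\mathrm{lcm}(d,d_i) \in K_G$ and $\mathrm{lcm}(d,d_i)$ is a multiple of $d_i$. Since $d_i$ is maximal in the divisibility poset $K_G$, we must have $\mathrm{lcm}(d,d_i) = d_i$, i.e.\ $d \mid d_i$. As this holds for every $i$, we conclude $d \mid \gcd(d_1,\dots,d_k) = l_G$.

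For the reverse direction, assume $d \mid l_G$, so $d$ divides each $d_i$. Take any $d' \in K_G$. Because every element of $K_G$ is bounded in the divisibility order by some maximal element, there is an index $i$ with $d' \mid d_i$. Together with $d \mid d_i$, this gives $\mathrm{lcm}(d,d') \mid d_i$. Since $d_i \in K_G$ and divisors of elements of $K_G$ lie in $K_G$, we get $\mathrm{lcm}(d,d') \in K_G$. By Corollary \ref{c-adjacent}, $x$ is a dominant vertex of $\Delta^o(G)$.

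There is no serious obstacle; the only point that needs a line of justification is the closure of $K_G$ under divisors (trivial from cyclic group structure) and the fact that every element of $K_G$ is dominated by some element of $\mu_G$ (which follows because $K_G$ is finite and $\mu_G$ is, by definition, its set of maximal elements). Both can be stated in a single sentence in the final write-up.
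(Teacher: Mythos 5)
Your proof is correct and follows essentially the same route as the paper: the forward direction compares $\mathrm{lcm}(d,d_i)$ against the maximality of each $d_i$ in $K_G$, and the reverse direction uses that every order in $K_G$ divides some maximal order together with divisor-closure of $K_G$. The only cosmetic difference is that you phrase everything in the poset $K_G$ via Corollary \ref{c-adjacent}, whereas the paper unwinds the same argument by exhibiting explicit commuting elements $h_1,h_2$ inside a cyclic subgroup of maximal order; both are valid and equivalent.
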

\begin{proof}
Let $x$ be a dominant vertex in $\Delta^o(G)$ and let $r\in\mu_G$. Then there exists an element  $z \in G$ such that $o(z)=r$. By Proposition \ref{adjacent}, there exist $z' \in G$ such that $o(z') = lcm (o(x), o(z))=lcm(d,r)$ as $x$ is a dominant vertex. As $r$ is a maximal element of $K_G$, so $d \mid r$.  Hence  $o(x) \mid l_G$.

Now suppose $d \mid l_G$ and let $y \in G$. If $o(x)=o(y)$ then $x\sim y$ in $\Delta^o(G).$ So assume that $o(x)\neq o(y)$ and let $o(y)=b$. Then $b$ must divide some elements of $\mu_G$. Without loss of generality, let $b\mid s$, where $s\in \mu_G.$ Then there exist  $h\in G$ such that $o(h)=s.$ Also $d \mid l_G$ implies $d\mid s$.  Let $h_1,h_2\in H=\langle h \rangle$ such that $o(h_1)=b$ and $o(h_2)=d$. Since $h_1h_2=h_2h_1$, we have $x \sim y$ in $\Delta^o(G)$. This completes the proof.
\end{proof}
 
For an abelian group $G$, $\mu_G$ contains only one element and $l_G$ is the exponent of $G$. So by Theorem \ref{dominant}, every vertex of $\Delta^o(G)$ is a dominant vertex. We will now examine the dominant vertices of  $\Delta^o(G)$ for some classes of non-abelian group.  First  consider the dihedral group $D_{2n}=\langle x,y \mid x^n=e=y^2, y^{-1}xy=x^{-1} \rangle,$ for $n\geq 3$. If $n$ is even then $\mu_{D_{2n}}=\{n\}$. So, $l_{D_{2n}}=n$ and the order of every element divides $n$. Hence by Theorem \ref{dominant}, $\Delta^o(D_{2n})$ is complete. If $n$ is odd then $\mu_{D_{2n}}=\{2,n\}$. So, $l_{D_{2n}}=1$ and hence by Theorem \ref{dominant}, $_d\Delta^o(D_{2n})=\{ e \}$. In this case the reduced order super commuting graph $\Delta^o(D_{2n})^*$ has two components, say $C_1$ and $C_2$. Both $C_1$ and $C_2$ are complete graphs on $n$ and $n-1$ vertices respectively, where $V(C_1)$ is the set of elements of $G$ with order $2$ and $V(C_2)$ is the set of elements of $G$ with order greater than $2$.

Now  consider the generalized quaternion group $Q_{4n}=\langle x,y \mid x^{2n}=e=y^4, x^n=y^2, y^{-1}xy=x^{-1} \rangle$, for $n\geq 2$. If $n$ is even then $\mu_{Q_{4n}}=\{2n\}$. So,  $l_{Q_{4n}}=2n$ and the order of every element divides $2n$. Hence by Theorem \ref{dominant}, $\Delta^o(Q_{4n})$ is complete. If $n$ is odd then $\mu_{Q_{4n}}=\{4,2n\}$. So, $l_{Q_{4n}}=2$ and hence by Theorem \ref{dominant}, $_d\Delta^o(Q_{4n})=\{ e,y^2 \}$. In this case, the reduced order super commuting graph $\Delta^o(Q_{4n})^*$ has two components, say $D_1$ and $D_2$. Both $D_1$ and $D_2$ are complete graphs on $2n$ and $2n-2$ vertices respectively, where $V(D_1)$ is the set of elements of $G$ with order $4$ and $V(C_2)$ is the set of elements of $G-\{e\}$ with odd order. 

Next we consider the symmetric group $S_n$ and alternating group $A_n$. Since $S_3$ is isomorphic to $D_6$ and $A_3$ is a cyclic group of order $3$, we assume $n\geq 4$. Let $\pi_n$ be the set of all primes less than or equal to $n$. The following result is useful.
\begin{theorem}(\cite{So}, Theorem 1)\label{Ramanujum-theorem}
For $n \in \mathbb N$, we have 
  \[\mid\pi_n \setminus \pi_{\lfloor\frac{n}{2}\rfloor}\mid \geq 1, 2, 3, 4, 5, 6, 7, \ldots\] if $n \geq 2, 11, 17, 29, 41, 47, 59, \ldots$, respectively. 
\end{theorem}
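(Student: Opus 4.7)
The statement is a quantitative strengthening of Bertrand's postulate of the type studied by Ramanujan in 1919 and later tabulated by Sondow: it asserts that the number of primes lying strictly above $n/2$ and at most $n$, namely $|\pi_n \setminus \pi_{\lfloor n/2 \rfloor}|$, grows without bound, and it lists explicit thresholds $n_k$ after which it is at least $k$. My plan is the standard two-step strategy: prove an asymptotic lower bound valid for all large $n$, and then handle the finitely many remaining values directly.

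For the asymptotic step, I would rephrase the inequality as $|\pi_n| - |\pi_{\lfloor n/2\rfloor}| \geq k$ and invoke Chebyshev-type bounds on the function $\vartheta(x) = \sum_{p \leq x} \log p$. Following Ramanujan, the identity
\[
\log \lfloor x \rfloor! \;-\; 2 \log \lfloor x/2 \rfloor! \;=\; \sum_{m \geq 1} \bigl(\vartheta(x^{1/m}) - 2\,\vartheta((x/2)^{1/m})\bigr)
\]
combined with Stirling's approximation produces two-sided bounds on $\vartheta(x) - \vartheta(x/2)$. Partial summation then converts these into a bound of the shape $|\pi_n| - |\pi_{\lfloor n/2 \rfloor}| \geq c\, n / \log n$ valid for all sufficiently large $n$, which immediately yields an effective constant $N_k$ such that the desired inequality holds whenever $n \geq N_k$.

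It then remains to sharpen $N_k$ down to the tight threshold $n_k$ by a finite computation: enumerate the primes up to $N_k$, tabulate $|\pi_n| - |\pi_{\lfloor n/2 \rfloor}|$ for $n_k \leq n \leq N_k$, and verify the inequality at each step; one also confirms failure at $n = n_k - 1$ so that $n_k$ is seen to be optimal. The main obstacle is making the asymptotic estimate sharp enough that $N_k$ is small enough to enumerate by hand. Ramanujan's crude bounds already handle $k = 1, 2, 3$ comfortably, but for larger $k$ one should feed in the refined Chebyshev estimates of Rosser-Schoenfeld (or Dusart) to keep the finite check manageable. Since the present paper only uses the result for finitely many values of $k$, a pragmatic alternative is to verify each required case in isolation by combining a modest asymptotic bound with a slightly longer explicit prime table, avoiding any attempt to make the asymptotic-to-explicit transition uniformly tight.
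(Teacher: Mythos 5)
This theorem is not proved in the paper at all: it is quoted verbatim from Sondow \cite{So} (Theorem 1), and the listed thresholds $2, 11, 17, 29, 41, 47, 59, \ldots$ are exactly the Ramanujan primes $R_1, R_2, \ldots$. So there is no internal argument to compare yours against; the relevant benchmark is Ramanujan's 1919 proof as reproduced by Sondow, and your outline is essentially that proof: bound $\vartheta(x)-\vartheta(x/2)$ from below by Chebyshev's method, pass to $\pi(x)-\pi(x/2)\geq \frac{1}{\log x}\left(\vartheta(x)-\vartheta(x/2)\right)$, conclude the count tends to infinity with an explicit cutoff, and finish with a finite tabulation. As a strategy this is the right one, and for the purposes of the present paper the citation is doing all the work anyway.

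Two things would need repair before your sketch becomes a proof. First, the identity you display is garbled. The Chebyshev identity is $\log\lfloor x\rfloor! = \sum_{n\geq 1}\psi(x/n)$ with $\psi(x)=\sum_{m\geq 1}\vartheta(x^{1/m})$, so that $\log\lfloor x\rfloor! - 2\log\lfloor x/2\rfloor! = \sum_{n\geq 1}(-1)^{n-1}\psi(x/n)$, an alternating sum over the arguments $x/n$; your right-hand side equals $\psi(x)-2\psi(x/2)$, which is not the same quantity, and the usable two-sided bounds $\psi(x)-\psi(x/2)\leq \log\lfloor x\rfloor!-2\log\lfloor x/2\rfloor!\leq \psi(x)-\psi(x/2)+\psi(x/3)$ come precisely from the alternating structure you dropped. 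Second, the proposal stops at the level of a plan: without carrying out the explicit constants (Ramanujan's bound of the shape $\vartheta(x)-\vartheta(x/2)>x/6-c\sqrt{x}$, or a Rosser--Schoenfeld refinement) and the finite verification, the specific thresholds $2,11,17,\ldots$ --- which are the actual content used later in Proposition \ref{T_T'} and Corollary \ref{T_1T_2_cor1} --- are asserted rather than established. Either complete those computations or, as the paper does, simply cite \cite{So}.
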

 For  a nonempty subset $T$ of $\pi_n$, define $P_T = \displaystyle\mathop{\sum}_{p \in T} p$. For $n\geq 4$, if $P_T\leq n$ then $T$ is a proper subset of $\pi_n$. We have the following proposition.
 
\begin{proposition}\label{T_T'}
For $n\geq 4$, let $T \subseteq \pi_n$ with $P_T\leq n$. Then there exist a nonempty subset $T' \subseteq \pi_n \setminus T$ such that $P_{T'} \leq n$ and $P_T + P_{T'} > n$.
\end{proposition}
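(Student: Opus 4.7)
Set $m = n - P_T$; the goal is to produce a nonempty $T' \subseteq \pi_n \setminus T$ with $P_{T'} \in (m, n]$. My first move is to look for a single-prime solution: if some $p \in \pi_n \setminus T$ satisfies $p > m$, take $T' = \{p\}$ (the bound $p \leq n$ is automatic), giving $P_T + P_{T'} > P_T + m = n$.

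Suppose no such $p$ exists, so every prime of $\pi_n \setminus T$ lies in $[2,m]$. Let $p^*$ be the largest prime in $\pi_n$; by Bertrand (the $k=1$ case of Theorem~\ref{Ramanujum-theorem}), $p^* > n/2$. If $p^* \in T$ then $P_T \geq p^* > n/2$, so $m < n/2$, and the standing assumption forces every prime in $(n/2,n]$ to lie in $T$. Any two primes in $(n/2,n]$ sum to more than $n \geq P_T$, so at most one of them can lie in $T$, contradicting the $k=2$ case of Theorem~\ref{Ramanujum-theorem} for $n \geq 11$. The finitely many residual values $n\in\{4,\ldots,10\}$ are handled by direct inspection. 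Otherwise $p^* \in \pi_n\setminus T$ and $p^* \leq m$, which forces $P_T \leq n-p^* < n/2$.

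In this last case I would seek $T' = \{p^*\}\cup U$ with $U \subseteq \pi_n\setminus(T\cup\{p^*\})$, requiring $P_U$ to land in the width-$P_T$ window $(m-p^*,\, n-p^*]$. The plan is a partial-sum argument: sorting the primes of $\pi_n\setminus(T\cup\{p^*\})$ of size at most $P_T$ in increasing order, their partial sums have jumps bounded by $P_T$ and therefore hit every width-$P_T$ sub-window lying in their range, provided the total sum reaches at least $n-p^*$. If these small primes are insufficient, one augments $U$ with one or two primes of $(P_T,m]\cap(\pi_n\setminus T)$, using the additional primes in $(n/2,n]$ supplied by Theorem~\ref{Ramanujum-theorem} with $k\geq 2$. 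The main obstacle is the narrowness of this window, which drops to $P_T = 2$ when $T=\{2\}$; precisely in that case $\pi_n\setminus(T\cup\{p^*\})$ contains no primes of size $\le P_T$ at all, so the partial-sum mechanism degenerates. I expect this to force either an inductive reduction to $(n, T\cup\{p^*\})$ — which still satisfies $P_{T\cup\{p^*\}}\leq n$ — followed by a choice of whether to retain or drop $p^*$ in the final $T'$, or an explicit construction exploiting the several primes of $(n/2,n]$ guaranteed by Theorem~\ref{Ramanujum-theorem} for large $n$, with direct enumeration absorbing the small-$n$ residuals.
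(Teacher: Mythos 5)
You dispatch the easy configurations correctly: if some prime $p\in\pi_n\setminus T$ exceeds $m=n-P_T$ then $T'=\{p\}$ works, and the sub-case $p^*\in T$ with every prime of $\pi_n\setminus T$ at most $m$ is indeed contradictory for $n\geq 11$ by the $k=2$ case of Theorem \ref{Ramanujum-theorem}. But the remaining case --- $p^*\notin T$ and $P_T\leq n-p^*<n/2$ --- is the heart of the proposition (already for $T=\{2\}$ it asks for distinct odd primes summing to $n-1$ or $n$), and there you offer only a plan, not a proof. The partial-sum mechanism needs jumps bounded by the window width $P_T$, and as you yourself observe it degenerates when $T=\{2\}$; more generally it fails whenever $\pi_n\setminus T$ contains no primes of size at most $P_T$. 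The fallback you sketch, an ``inductive reduction to $(n, T\cup\{p^*\})$'', does not close either: applying the statement to $T\cup\{p^*\}$ yields some $T''$ with $P_{T''}>n-P_T-p^*$ and $P_{T''}\leq n$, but if $P_T+P_{T''}\leq n$ you must adjoin $p^*$ to $T''$, and nothing bounds $P_{T''}+p^*$ by $n$ (e.g.\ for $n=100$, $T=\{2\}$, the set $T''=\{89\}$ is a legitimate output of the hypothesis applied to $\{2,97\}$, yet $\{89,97\}$ sums to $186$). Keeping $n$ fixed while enlarging $T$ loses exactly the upper-bound control the conclusion requires.

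The paper avoids this by inducting on $n$ rather than on $T$: writing $p_1>p_2$ for the two largest primes of $\pi_k$ (both exceed $\lfloor k/2\rfloor$ by Theorem \ref{Ramanujum-theorem}), it settles the cases $k\in\{p,p+1\}$, $p_1\in T$, and $P_T\geq k-p_2$ directly with singleton choices of $T'$, and otherwise applies the induction hypothesis at the smaller parameter $k-p_2$ with the \emph{same} $T$, obtaining $T''\subseteq\pi_{k-p_2}\setminus T$ with $P_{T''}\leq k-p_2$ and $P_T+P_{T''}>k-p_2$; then $T'=T''\cup\{p_2\}$ satisfies both $P_{T'}\leq k$ and $P_T+P_{T'}>k$ automatically. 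The bound $P_{T''}\leq k-p_2$, which comes from running the induction at the reduced value $k-p_2$, is precisely the control your fixed-$n$ reduction lacks; to repair your argument you would need to restructure the final case along these lines.
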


\begin{proof}
We will prove this by induction on $n$. For $4\leq n \leq 10$, the result can be verified easily. Assume that the result is true for $n\leq k-1$. Suppose $n=k\geq 11$. If $k$ is one of the form $p$ or $p + 1$ for some prime $p$, then  the result holds by taking $T'=\{2\}$ or $\{p\}$ depending on $p\in T$ or not. So, assume that $k$ is not of the form $p$ or $p + 1$ for any prime $p$. Let $p_1$ and $p_2$ be the two largest elements of $\pi_k$ with $p_2 < p_1$. By Theorem \ref{Ramanujum-theorem},   $p_1, p_2 > \lfloor\frac{k}{2}\rfloor$. As  $P_T\leq k$, so both $p_1$ and $p_2$  can not be in $T$. If $p_1 \in T$, then we choose $T' = \{p_2\}$ and vice verse, the result follows. 

Now, let $p_1, p_2 \notin T$. If $P_T\geq k-p_2$, take $T'=\{p_1\}$ and the result follows. So let $p_1, p_2 \notin T$ and $P_T< k-p_2$. We have $k - p_2 \geq 4$ as $k$ is not of the form $p$ or $p + 1$ for any prime $p$ and  $T\subseteq \pi_{k-p_2}$ as $P_T< k-p_2$. By induction hypothesis, there exists $\{x_1, x_2, \ldots, x_q\} \subseteq \pi_{k - p_2} \setminus T$ such that $P_T + x_1 + x_2 + \cdots + x_q > k-p_2$ with $x_1 + x_2 + \cdots + x_q \leq k-p_2$. As  $k-p_2<p_2$ so $p_2 \notin \{x_1 , x_2 , \ldots , x_q\}$. Choose $T' = \{x_1,x_2, \ldots , x_q, p_2\} \subseteq \pi_n \setminus T$ and the result follows.
\end{proof}

\begin{theorem}\label{Dom-Sn}
For $n\geq 4$, $_d\Delta^o(S_n)=\{e\}$
\end{theorem}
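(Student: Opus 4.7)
The plan is to apply Theorem \ref{dominant}: a vertex $x\in S_n$ is dominant in $\Delta^o(S_n)$ if and only if $o(x)\mid l_{S_n}$, so it suffices to prove $l_{S_n}=1$. Equivalently, I want to show that for every prime $p\leq n$ there exists an element of $\mu_{S_n}$ not divisible by $p$; the prime divisors of $l_{S_n}$ are necessarily at most $n$ (each element of $\mu_{S_n}$ is the order of some permutation in $S_n$), so ruling each such $p$ out forces $l_{S_n}=1$.

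First I would record the arithmetic description of $K_{S_n}$: a positive integer $N$ with prime factorization $N=p_1^{a_1}\cdots p_k^{a_k}$ lies in $K_{S_n}$ if and only if $p_1^{a_1}+\cdots+p_k^{a_k}\leq n$. One direction uses a permutation with disjoint cycles of lengths $p_1^{a_1},\ldots,p_k^{a_k}$, whose lcm is $N$. For the converse, if $\sigma\in S_n$ has order $N$, one groups the cycles of $\sigma$ so that each $p_i^{a_i}$ divides the length of some cycle in its group; using $m_1 m_2\geq m_1+m_2$ for integers $m_1,m_2\geq 2$ inductively shows that the total cycle length, which is at most $n$, dominates $\sum_i p_i^{a_i}$.

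Next, for a fixed prime $p\leq n$, I would apply Proposition \ref{T_T'} with $T=\{p\}$ to obtain $T'=\{q_1,\ldots,q_s\}\subseteq \pi_n\setminus\{p\}$ satisfying $P_{T'}\leq n$ and $P_{T'}>n-p$. Setting $N_0=q_1\cdots q_s$ gives $N_0\in K_{S_n}$ and $p\nmid N_0$; by finiteness of $K_{S_n}$, extend $N_0$ to a maximal element $N\in\mu_{S_n}$ with $N_0\mid N$. The crucial verification is $p\nmid N$: if instead $N=p^a N'$ with $a\geq 1$ and $\gcd(p,N')=1$, then $N_0\mid N'$ forces each $q_i$ to appear in the factorization of $N'$, so the sum of prime powers appearing in $N'$ is at least $P_{T'}$; adding $p^a\geq p$ produces a sum strictly exceeding $(n-p)+p=n$, contradicting $N\in K_{S_n}$.

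Applying the construction for every prime $p\leq n$ shows that no such $p$ divides $l_{S_n}=\gcd(\mu_{S_n})$, hence $l_{S_n}=1$ and Theorem \ref{dominant} yields $_d\Delta^o(S_n)=\{e\}$. The main obstacle is the extension step, since one must produce a specific maximal element of $K_{S_n}$ that avoids a prescribed prime; the lower bound $P_{T'}>n-p$ delivered by Proposition \ref{T_T'} is precisely what keeps the sum-of-prime-powers estimate tight enough to block $p$ from creeping back in during the extension.
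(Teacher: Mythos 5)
Your proof is correct, and it reaches the conclusion by a route that differs in structure from the paper's, even though both hinge on Proposition \ref{T_T'}. The paper argues directly: given a putative dominant vertex $x\neq e$, it takes $T$ to be the \emph{entire} set of primes dividing $o(x)$, invokes Proposition \ref{T_T'} once to get $T'$, builds a single witness $y$ of order $\prod_{q\in T'}q$, and contradicts $x\sim y$ via Proposition \ref{adjacent} (an element of order $o(x)o(y)$ cannot exist in $S_n$). You instead route everything through Theorem \ref{dominant}, reducing the claim to $l_{S_n}=1$, and for each prime $p\leq n$ apply Proposition \ref{T_T'} with the singleton $T=\{p\}$; this obliges you to carry out the extra extension-to-a-maximal-element step and to verify that $p$ cannot reappear in the extension, which is exactly where your inequality $P_{T'}>n-p$ does its work. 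Your verification of that step is sound (the sum-of-prime-powers characterization of $K_{S_n}$, which you prove explicitly via $m_1m_2\geq m_1+m_2$, is used only implicitly by the paper), and your reduction from "no prime $p\leq n$ divides $l_{S_n}$" to "$l_{S_n}=1$" is justified since every element of $\mu_{S_n}$ is an order of a permutation. What the paper's approach buys is brevity — no maximal elements, one application of Proposition \ref{T_T'} per vertex rather than per prime; what yours buys is the slightly stronger and reusable statement $l_{S_n}=1$ together with an explicit description of $K_{S_n}$, which makes the appeal to Theorem \ref{dominant} completely mechanical.
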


\begin{proof}
Let $x\neq e$ be a dominant vertex of $\Delta^o(S_n)$. Consider $T = \{p \in \pi_n : p \mid o(x) \}$.  By the cyclic decomposition of $x$, we have  $P_T \leq n$. By Proposition \ref{T_T'}, there exist a nonempty $T' \subseteq \pi_n \setminus T$ with $P_{T'} \leq n$ such that $P_T + P_{T'} > n$. Let $T' = \{q_1, q_2, \ldots, q_l\}$ and  choose $y\in S_n$ such that $y=y_1y_2\cdots y_l$ where $y_i$'s are the disjoint cycles  of length  $q_i$,  $1 \leq i \leq l$. Such a $y$ exists as $P_{T'} \leq n$. Since $x$ is adjacent with $y$, by Proposition \ref{adjacent}  there exists $z \in S_n$ such that $o(z) = lcm (o(x), o(y)) = o(x) o(y)$, a contradiction to the fact that $P_T + P_{T'} > n$.  
\end{proof}
 
Let $\widetilde{\pi_n} =\left(\pi_n\setminus\{2\}\right)\cup\{4\}$.  For a nonempty subset $T$ of $\widetilde{\pi_n}$, define $\widetilde{P_T} = \displaystyle\mathop{\sum}_{p \in T} p$. Note that if $\widetilde{P_T}\leq n$ for some nonempty  $T\subseteq \widetilde{\pi_n}$ then $T$ is a proper subset of $\widetilde{\pi_n}$. It is easy to see that Theorem \ref{Ramanujum-theorem} still holds if we replace $\pi_n$ by $\widetilde{\pi_n}$ in the statement. With this fact, the following result can be obtained about the subsets of $\widetilde{\pi_n}$ by using a similar argument as in the proof of  Proposition \ref{T_T'}.  
\begin{proposition}\label{T_T'_An}
For $n\geq 4$, let $T \subseteq \widetilde{\pi_n}$ with $\widetilde{P_T}\leq n$. Then there exist a nonempty set $T' \subseteq \widetilde{\pi_n} \setminus T$ such that $\widetilde{P_{T'}} \leq n$ and $\widetilde{P_T} + \widetilde{P_{T'}} > n$.
\end{proposition}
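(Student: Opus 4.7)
The plan is to imitate the proof of Proposition~\ref{T_T'} step for step, since the inductive engine, powered by Theorem~\ref{Ramanujum-theorem}, transfers verbatim to $\widetilde{\pi_n}$; the only modification is cosmetic, with the singleton $\{2\}$ used in the original argument (the unique even prime of $\pi_n$) replaced by $\{4\}$ (the unique non-odd element of $\widetilde{\pi_n}$).

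First I would verify the claim directly for $4\le n\le 10$ by enumeration: for each such $n$ there are only a handful of nonempty $T\subseteq\widetilde{\pi_n}$ with $\widetilde{P_T}\le n$, and in each case a witness $T'$ is routine to exhibit.

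For the inductive step at $n=k\ge 11$, assume the claim for all smaller values and let $T\subseteq\widetilde{\pi_k}$ with $\widetilde{P_T}\le k$. The case split mirrors the original. If $k=p$ or $k=p+1$ for the largest odd prime $p\le k$, then either $p\notin T$ and $T'=\{p\}$ works, or $p\in T$: the bound $\widetilde{P_T}\le k$ forces $T=\{p\}$ (any further element of $\widetilde{\pi_k}$ is at least $3$, which would overshoot), so $T'=\{4\}$ works and yields $\widetilde{P_T}+\widetilde{P_{T'}}\ge p+4>k$. Otherwise $k\ge p_1+2$, where $p_1>p_2$ are the two largest odd primes in $\widetilde{\pi_k}$. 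The extension of Theorem~\ref{Ramanujum-theorem} to $\widetilde{\pi_k}$ guarantees $p_1,p_2>\lfloor k/2\rfloor$, so in particular $p_1+p_2>k$ and not both of $p_1,p_2$ can lie in $T$. If exactly one lies in $T$, take $T'$ to be the singleton containing the other; if neither lies in $T$ and $\widetilde{P_T}\ge k-p_2$, take $T'=\{p_1\}$ (so $\widetilde{P_T}+\widetilde{P_{T'}}\ge (k-p_2)+p_1\ge k+2$); if neither lies in $T$ and $\widetilde{P_T}<k-p_2$, then $T\subseteq\widetilde{\pi_{k-p_2}}$ and, because $k-p_2\ge 4$ (from $k\ge p_1+2$ and $p_1-p_2\ge 2$), the induction hypothesis supplies $T''\subseteq\widetilde{\pi_{k-p_2}}\setminus T$ with $\widetilde{P_{T''}}\le k-p_2$ and $\widetilde{P_T}+\widetilde{P_{T''}}>k-p_2$; then $T'=T''\cup\{p_2\}$ works, with $p_2\notin T''$ because $p_2>k-p_2$.

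I do not anticipate a serious obstacle: the extension of Theorem~\ref{Ramanujum-theorem} to $\widetilde{\pi_n}$ is already flagged in the paper, the threshold $k-p_2\ge 4$ that justifies the inductive descent is preserved because the set of odd primes is unchanged, and replacing $\{2\}$ by $\{4\}$ in the near-prime case only makes the required inequality $\widetilde{P_T}+\widetilde{P_{T'}}>k$ strictly easier. The sole point demanding actual work is the finite enumeration in the base cases.
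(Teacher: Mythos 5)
Your proposal is correct and follows exactly the route the paper intends: the paper itself omits the proof of Proposition \ref{T_T'_An}, stating only that it follows by the argument of Proposition \ref{T_T'} with $\pi_n$ replaced by $\widetilde{\pi_n}$ (using the extension of Theorem \ref{Ramanujum-theorem}), which is precisely what you carry out, and your verification of the key adaptations (the substitution of $\{4\}$ for $\{2\}$ in the near-prime case, the preservation of the bound $k-p_2\ge 4$, and the exclusion $p_2\notin T''$) is sound.
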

 
\begin{theorem}\label{Dom-An}
For $n\geq 4$, $_d\Delta^o(A_n)=\{e\}$
\end{theorem}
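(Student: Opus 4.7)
The strategy parallels that of Theorem~\ref{Dom-Sn}, with $\pi_n$ replaced by $\widetilde{\pi_n}$. Given a supposed non-identity dominant vertex $x$ of $\Delta^o(A_n)$, I would attach to it the set
\[
T = \{p \in \pi_n \setminus \{2\} : p \mid o(x)\} \cup \{4 : 2 \mid o(x)\} \subseteq \widetilde{\pi_n},
\]
first establish the a priori bound $\widetilde{P_T} \leq n$, then invoke Proposition~\ref{T_T'_An} to choose $T' \subseteq \widetilde{\pi_n}\setminus T$ with $\widetilde{P_{T'}} \leq n$ and $\widetilde{P_T}+\widetilde{P_{T'}} > n$, exhibit a $y \in A_n$ whose analogously defined set is $T'$, and derive a contradiction via Proposition~\ref{adjacent}.

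For the key bound, I would write the disjoint cycle lengths of $x$ as $\ell_1,\dots,\ell_m$ and let $L_o$, $L_e$ denote the sum of odd, respectively even, lengths. Assigning each odd prime $p \in T$ to a cycle whose length is divisible by $p$, the product of the primes assigned to a single cycle divides its odd part, and since the sum of distinct primes $\geq 2$ is bounded by their product, the sum of primes assigned is at most $\ell_i$ for an odd cycle and at most $\ell_i/2$ for an even cycle. If $4 \notin T$ then $o(x)$ is odd, all cycles are odd, and $\widetilde{P_T}\leq L_o \leq n$. If $4 \in T$, then membership in $A_n$ forces an even number of even-length cycles, hence at least two, so $L_e \geq 4$. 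I would then split into: (A) every even cycle has $2$-power length, so its odd part is $1$ and no odd prime is assigned to it, giving $\widetilde{P_T} \leq L_o + 4 \leq (n-L_e)+4 \leq n$; (B) some even cycle has odd part $\geq 3$, hence length $\geq 6$, and together with a second even cycle forces $L_e \geq 8$, so $\widetilde{P_T} \leq L_o + L_e/2 + 4 \leq n - L_e/2 + 4 \leq n$.

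To close the argument, I would take $T'$ from Proposition~\ref{T_T'_An} and define $y$ to be the disjoint product of one $q$-cycle for each odd $q \in T'$ together with two $2$-cycles when $4 \in T'$. Its support has size $\widetilde{P_{T'}} \leq n$, each odd-length cycle is an even permutation, and a pair of transpositions is even, so $y \in A_n$ and the set associated to $y$ in $\widetilde{\pi_n}$ is precisely $T'$. Dominance of $x$ together with Proposition~\ref{adjacent} then produces $z \in A_n$ with $o(z) = lcm(o(x),o(y))$, whose associated set is $T \cup T'$ (the union being disjoint). Applying the key bound to $z$ yields $\widetilde{P_T}+\widetilde{P_{T'}} = \widetilde{P_{T \cup T'}} \leq n$, contradicting the choice of $T'$.

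The main obstacle is the bound $\widetilde{P_T} \leq n$: for $S_n$ this reduces to the one-line observation that cycle lengths sum to at most $n$, but here the symbol $4 \in T$ must be charged against the combined length of at least two even cycles, and that budget is not automatically $\geq 4$ unless one separates short $2$-power cycles (which absorb no odd primes) from even cycles that contain an odd prime factor (which then push $L_e$ up to $\geq 8$). Once this dichotomy is in place, the rest of the argument goes through exactly as in the $S_n$ case.
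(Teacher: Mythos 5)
Your proposal is correct and follows essentially the same route as the paper: the same choice of $T$ (odd primes dividing $o(x)$, with $4$ adjoined when $o(x)$ is even), the same appeal to Proposition~\ref{T_T'_An}, the same construction of $y$ (two disjoint transpositions standing in for $4\in T'$), and the same contradiction via Proposition~\ref{adjacent}. The only difference is that you carefully justify the bound $\widetilde{P_T}\leq n$ (separating $2$-power even cycles from even cycles with odd part at least $3$), a step the paper asserts without proof; your dichotomy is a valid and welcome filling-in of that gap.
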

\begin{proof}
Let $x\neq e$ be a dominant vertex of $A_n$. If $o(x)$ is odd, take $T = \{p \in \widetilde{\pi_n} : p \mid o(x) \}$ and if $o(x)$ is even, take $T=\{p \in \widetilde{\pi_n} : p \mid o(x)\; \mbox{and p is odd} \}\cup\{4\}$. In both the cases $T\subseteq\widetilde{\pi_n}$ with  $\widetilde{P_T}\leq n$. By Proposition \ref{T_T'_An}, there exist a nonempty  set $T' \subseteq \widetilde{\pi_n} \setminus T$ such that $\widetilde{P_T} + \widetilde{P_{T'}} > n$ and $\widetilde{P_{T'}} \leq n$. Let $T' = \{q_1, q_2, \ldots, q_l\}$. If $4\notin T'$ choose $y\in A_n$ such that $y=y_1y_2\cdots y_l$ where $y_i$'s are the disjoint cycles  of length  $q_i$,  $1 \leq i \leq l$. If $4\in T'$, without loss of generality take $q_1=4$ and choose $y\in A_n$ such that $y=y_1y_2\cdots y_ly_{l+1}$ where $y_1$ and $y_2$ are two  disjoint $2$-cycles  and $y_3,\ldots, y_{l+1}$ are disjoint cycles of lengths  $q_2,\ldots,q_l$, respectively. Since $x$ is adjacent with $y$, from Proposition \ref{adjacent} there exists $z \in A_n$ such that $o(z) = lcm(o(x), o(y)) = o(x) o(y)$, a contradiction to the fact that $\widetilde{P_T} + \widetilde{P_{T'}} > n$.
\end{proof}

\subsection{Reduced order super commuting graph $\Delta^o(S_n)^*$}

In this subsection, we examine the connectedness and diameter of $\Delta^o(S_n)^*$. As per the discussion earlier (see the paragraph after Theorem \ref{dominant}), $\Delta^o(S_3)^*$ is disconnected and has two components. In the next result we will characterize the values of $n$ for which $\Delta^o(S_n)^*$ is connected.

\begin{proposition}\label{connec-order-Comm-S_n}
Let $n \geq 4$. Then  $\Delta^o(S_n)^*$ is connected if and only if neither $n$ nor $n-1$ is a prime. Also if $\Delta^o(S_n)^*$ is disconnected, then it has exactly two  components. 
\end{proposition}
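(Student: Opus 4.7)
The plan is to identify precisely which elements might form an isolated component of $\Delta^o(S_n)^*$, and then show that the rest of the graph is always connected through transpositions.

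I would first record an arithmetic preliminary: for $n\geq 4$, at most one of $n$ and $n-1$ is prime (since the only consecutive prime pair is $(2,3)$), and whenever such a prime $p\in\{n-1,n\}$ exists we have $p\geq 3$ odd, $2p>n$, and $p^2>n$. The key isolation step is then the claim that if $p\in\{n-1,n\}$ is prime, the set $X_p:=\{g\in S_n:o(g)=p\}$ is a whole component of $\Delta^o(S_n)^*$. To prove this, take $x\in X_p$ and a non-identity neighbour $y$; by Proposition \ref{adjacent} there exists $z\in S_n$ with $o(z)=\mathrm{lcm}(p,o(y))$. If $p\nmid o(y)$, then $o(z)=p\cdot o(y)\geq 2p$, and $z$ must contain disjoint cycles contributing a factor $p$ and a factor of $o(y)$, requiring at least $p+2>n$ moved points, a contradiction. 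If $p\mid o(y)$, then $y$ has a cycle of length $p$ (the only multiple of $p$ that fits), and because $p^2>n$ no additional cycle can enlarge the order, so $o(y)=p$ and $y\in X_p$. As $X_p$ is itself a clique, this makes it a full component.

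The main work is then to show that $V':=(S_n\setminus\{e\})\setminus X_p$ (with $X_p$ empty when no bad prime exists) lies in a single component containing the transposition $\tau=(1,2)$. If $o(x)$ is even, then $\mathrm{lcm}(o(x),2)=o(x)\in K_{S_n}$ and Proposition \ref{adjacent} gives $x\sim\tau$ directly. If $o(x)$ is odd, let $p'$ be the smallest prime factor of $o(x)$ and set $x':=x^{o(x)/p'}\in\langle x\rangle$, which has order $p'$ and commutes with $x$. I would then establish the sub-claim $p'\leq n-2$: otherwise every prime factor of $o(x)$ lies in $\{n-1,n\}\cap\pi_n$, a set with at most one element $p$ by the preliminary, so $o(x)=p^a$; since $p^2>n$ this forces $a=1$, placing $x\in X_p$ contrary to $x\in V'$. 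Given $p'\leq n-2$, a $p'$-cycle together with a disjoint transposition is an element of $S_n$ of order $2p'$, so $x'\sim\tau$ by Proposition \ref{adjacent}. Finally, either $x=x'$ (when $o(x)=p'$) or $x\sim x'$ in $\Delta(S_n)\subseteq\Delta^o(S_n)$ since they commute, completing the path $x\sim x'\sim\tau$.

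Putting the two steps together: if neither $n$ nor $n-1$ is prime then $V'=S_n\setminus\{e\}$ and $\Delta^o(S_n)^*$ is connected; if exactly one of $n,n-1$ is a prime $p$, then $\Delta^o(S_n)^*$ has exactly two components, namely $X_p$ and $V'$. The main obstacle is the isolation argument: one must rule out every possible edge from $X_p$ to an element of different order, which depends tightly on the inequalities $2p>n$ and $p^2>n$ forcing any would-be neighbour either to lie in $X_p$ or to coincide with $e$. Once this is in hand, the connectivity of the complement is a routine "route through a transposition" argument.
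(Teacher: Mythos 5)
Your proof is correct and follows essentially the same route as the paper: both isolate the order-$p$ elements (for the prime $p\in\{n-1,n\}$, if any) as a clique forming its own component via Proposition \ref{adjacent} together with $2p>n$, and connect all remaining non-identity elements through order-$2$ vertices using that every relevant prime is at most $n-2$. One tiny quibble: in showing that a neighbour $y$ of $x\in X_p$ with $p\mid o(y)$ must have $o(y)=p$, the operative fact is that a $p$-cycle with $p\geq n-1$ leaves at most one point unmoved, so $y$ can have no further nontrivial cycle; the inequality $p^2>n$ you cite there is not the reason.
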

\begin{proof}
By Theorem \ref{Dom-Sn}, $V(\Delta^o(S_n)^*)=S_n\setminus \{e\}$. Suppose neither $n$ nor $n-1$ is a prime. Then for any $p\in \pi_n$, $p\leq n-2$.  Let $x\neq e$ be an element of $S_n$. If $o(x)$ is prime then by Proposition \ref{adjacent}, $x$ is adjacent to all the vertices of order two. If  $o(x)=l$ and  $l$ is composite then  by Proposition \ref{adjacent} $x$ is adjacent to vertices whose orders are prime divisors of $l$. So  $\Delta^o(S_n)^*$ is connected.

Now let either $n$ or $n-1$ be a prime. Let that prime be $q$. Then  $q+k>n$ for any $k\geq 2$ and this implies that there can not be any element in $S_n$ whose order is $\alpha q$ where $\alpha\geq 2$. So, by Proposition \ref{adjacent}, a vertex of order $q$ in $\Delta^o(S_n)^*$ is  adjacent to a vertex of order $q$ only. Also by Proposition \ref{adjacent}, any vertex of prime order other than $q$ is adjacent to vertices of order two in $\Delta^o(S_n)^*$ and any vertex of composite order $l$ is adjacent to vertices whose orders are prime divisors of $l$ (in this case $l$ is not a multiple of $q$). So there is no path in $\Delta^o(S_n)^*$ between a vertex of order $q$ and a vertex of order not equal to $q$. Thus $\Delta^o(S_n)^*$ is disconnected and the induced subgraph over the vertices of order $q$ and the induced subgraph over the vertices having order other than $q$ forms two  components of $\Delta^o(S_n)^*$. This complete the proof.
\end{proof}

We will now examine the diameter of $\Delta^o(S_n)^*$ when neither $n$ nor $n-1$ is a prime.
\begin{lemma}\label{dist_2}
Let $\Delta^o(S_n)^*$ be connected and let $x \in V(\Delta^o(S_n)^*)$. Then there exists  $y \in V(\Delta^o(S_n)^*)$ with $o(y) = 2$ such that $d(x,y)\leq 2.$ 
\end{lemma}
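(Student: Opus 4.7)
My plan is to split according to the parity of $m := o(x)$. Theorem~\ref{Dom-Sn} tells us $x \neq e$, so $m \geq 2$. If $m$ is even, then for any transposition $y \in S_n$ we have $\mathrm{lcm}(m,2) = m \in K_{S_n}$, so Proposition~\ref{adjacent} yields $x \sim y$ directly and $d(x,y)=1$.

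The substantive case is $m$ odd (so $m \geq 3$). The idea is to pick an odd prime $p$ dividing $m$ and use a $p$-cycle $z \in S_n$ as an intermediate vertex on a path $x \sim z \sim y$ of length $2$, where $y$ is any transposition. The adjacency $x \sim z$ is essentially free: $\mathrm{lcm}(m,p) = m$, and $m \in K_{S_n}$ is witnessed by $x$ itself, so Proposition~\ref{adjacent} applies. For the second edge we need $\mathrm{lcm}(p,2) = 2p \in K_{S_n}$, which holds as soon as a $p$-cycle and a disjoint transposition fit inside $\{1,\dots,n\}$, i.e.\ whenever $p + 2 \leq n$.

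Thus the whole argument reduces to the inequality $p \leq n-2$, and this is where the hypothesis of the lemma must be used. Since $p$ divides the order of $x \in S_n$, some cycle of $x$ has length a multiple of $p$, so $p \leq n$; and $p$ is odd, so $p \neq 2$. By the lemma's hypothesis (equivalently, by Proposition~\ref{connec-order-Comm-S_n}) neither $n$ nor $n-1$ is prime, ruling out $p = n$ and $p = n-1$ and forcing $p \leq n-2$. I expect this small step to be the only subtle point; once $p \leq n-2$ is established, two applications of Proposition~\ref{adjacent} give the desired path $x - z - y$ and hence $d(x,y) \leq 2$.
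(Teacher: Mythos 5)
Your proof is correct and follows essentially the same route as the paper's: a path of length at most two through an element of prime order, with the crux being that the connectedness hypothesis (neither $n$ nor $n-1$ prime) forces the relevant prime $p$ to satisfy $p\leq n-2$, so that a $p$-cycle commutes with a disjoint transposition and Proposition~\ref{adjacent} supplies both edges. The only difference is cosmetic: you split on the parity of $o(x)$, getting the even case for free from $\mathrm{lcm}(o(x),2)=o(x)$, whereas the paper splits on $o(x)$ prime versus composite and routes through a prime-order power of $x$.
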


\begin{proof}
Suppose $o(x) = k$.  First let $k$ be a prime. By Proposition \ref{connec-order-Comm-S_n}, $n$ is not equal to $p$ or $p + 1$, where $p$ is a prime. As a consequence, we get $k \leq n - 2$. There exist a $z\in S_n$ such that $z$ is a $k$-cycle. Then we have a transposition $y = (\alpha, \beta)$, where $\alpha$ and $\beta$ are fixed by $z$. This implies that $yz=zy$ and so $x \sim y$ in  $\Delta^o(S_n)^*$ as $o(x)=o(z)$. Now assume that $k$ is composite. Then there exists $w \in \langle x \rangle$ such that $o(w)$ is prime. By using the similar argument as above, we get $y \in S_n$ such that $o(y) = 2$ and $y \sim w$. Thus, we have a path $x \sim w \sim y$, the result holds.   
\end{proof}

\begin{theorem}\label{dist_3}
If $\Delta^o(S_n)^*$ is connected, then diam$(\Delta^o(S_n)^*) \leq 3$.
\end{theorem}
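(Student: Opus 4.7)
The plan is to partition by $\gcd(o(x_1),o(x_2))$ and, in each case, construct a path of length at most $3$ using two mechanisms from Proposition~\ref{adjacent}: (i) any element $x$ of composite order is adjacent to every element of $\langle x\rangle$ of prime order (they commute in $\Delta(S_n)\subseteq\Delta^o(S_n)$), and (ii) two elements of prime orders $p$ and $q$ in $S_n$ are adjacent whenever $p+q\le n$, since disjoint $p$- and $q$-cycles witness $pq\in K_{S_n}$. The connectivity hypothesis together with Proposition~\ref{connec-order-Comm-S_n} forces every prime $p\le n$ to satisfy $p\le n-2$, so every prime-order element is adjacent to every element of order $2$ in $\Delta^o(S_n)$.

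When $\gcd(o(x_1),o(x_2))>1$, I would choose a common prime divisor $p$ and take $w_i\in\langle x_i\rangle$ of order $p$ (setting $w_i=x_i$ when $o(x_i)=p$). Since $o(w_1)=o(w_2)=p$, the representatives lie in the same $\rho_o$-class, so $w_1\sim w_2$ and the path $x_1\sim w_1\sim w_2\sim x_2$ has length at most $3$.

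When $\gcd(o(x_1),o(x_2))=1$, at most one of the two orders is even. If one is even, say $o(x_1)$, I would take $w_1\in\langle x_1\rangle$ of order $2$ and $w_2\in\langle x_2\rangle$ of some prime order $p_2$; then $2+p_2\le n$ gives $w_1\sim w_2$, producing a path of length at most $3$. If both orders are odd, I subdivide: (a) if both are odd primes, any transposition $y$ is adjacent to each $x_i$, yielding a length-$2$ path $x_1\sim y\sim x_2$; (b) if exactly one is composite, say $o(x_2)$, pick $w_2\in\langle x_2\rangle$ of prime order and route $x_1\sim y\sim w_2\sim x_2$ through a transposition $y$; (c) if both are odd composite, take $w_i\in\langle x_i\rangle$ of order the smallest prime divisor $p_i^*$ of $o(x_i)$ and try to establish $w_1\sim w_2$ directly.

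The hard part will be case (c), which reduces to showing $p_1^*+p_2^*\le n$. I plan to split on the shape of each $o(x_i)$: if $o(x_i)=p^k$ with $k\ge 2$ then $x_i$ forces a $p^k$-cycle in $S_n$, so $p_i^*=p\le\sqrt{n}$; otherwise $o(x_i)$ has at least two distinct odd prime factors $p_i^*<q$, and any cycle structure realizing $o(x_i)$ (either a single cycle of length divisible by $p_i^*q$, or disjoint $p_i^*$- and $q$-cycles) yields $p_i^*\le (n-2)/2$. Since $(n-2)/2\ge\sqrt{n}$ for all $n\ge 9$, and $n=9$ is the smallest $n$ for which $\Delta^o(S_n)^*$ is connected, one obtains $p_i^*\le(n-2)/2$ in every case; coprimality forces $p_1^*\ne p_2^*$, so $p_1^*+p_2^*\le n-2<n$. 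Then disjoint $p_1^*$- and $p_2^*$-cycles place $p_1^*p_2^*$ in $K_{S_n}$, giving $w_1\sim w_2$ via Proposition~\ref{adjacent} and closing the length-$3$ path $x_1\sim w_1\sim w_2\sim x_2$.
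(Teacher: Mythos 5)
Your proposal is correct, and its skeleton matches the paper's: both arguments split on $m=\gcd(o(x_1),o(x_2))$, build explicit paths of length at most $3$ through prime-order elements, and rely on Proposition~\ref{adjacent} plus the observation that connectivity (via Proposition~\ref{connec-order-Comm-S_n}) forces every prime $p\le n$ to satisfy $p\le n-2$. The genuine divergence is in the coprime case. The paper first handles the situation where one order is prime by routing through the clique of order-$2$ elements (Lemma~\ref{dist_2}), and then, for two composite coprime orders, chooses prime divisors $p_1\mid l$ and $p_2\mid k$ with $p_1<p_2$ and argues asymmetrically from the cycle structure of $y$ alone: either $p_2^2\mid k$, so $p_1+p_2<p_2^2\le n$, or $k$ has a second prime divisor $p_3$ with $p_2+p_3\le n$, whence $p_1+p_3\le n$. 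You instead split on parity (routing through an order-$2$ element of $\langle x_1\rangle$ when one order is even) and, in the both-odd-composite case, prove the symmetric bound $p_i^*\le (n-2)/2$ on each smallest prime divisor, from which $p_1^*+p_2^*\le n-2$ is immediate. Both routes work: the paper's dichotomy avoids the $\sqrt{n}$-versus-$(n-2)/2$ comparison and the appeal to $n\ge 9$, while your bound makes it transparent why the two smallest primes always fit disjointly into $n$ symbols. Two minor remarks: your $m>1$ case yields a path of length $3$ where the paper gets length $2$ by using a single element of order $m$ adjacent to both endpoints (harmless for the stated bound), and your phrase ``either a single cycle of length divisible by $p_i^*q$, or disjoint $p_i^*$- and $q$-cycles'' should read ``cycles of lengths that are multiples of $p_i^*$ and $q$,'' which only strengthens the inequalities you need.
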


\begin{proof}
Let $x, y \in S_n \setminus \{e\}$ with $o(x) = l$ and $o(y) = k$. Let $ gcd(l, k) =m$. If $m>1$ then there exist $a, b \in S_n$ such that $a \in \langle x \rangle$ and $b \in \langle y \rangle$ with $o(a)=o(b)=m$. This gives $x \sim a \sim y$ and we have  $d(x, y) \leq 2$. 

Now consider $m = 1$. If $l$ is prime, then using the similar argument as in Lemma \ref{dist_2}, there exists $x' \in S_n \setminus \{e\}$  with $o(x')=2$ such that $x \sim x'$. Also by Lemma \ref{dist_2}, there exists $y' \in S_n \setminus \{e\}$  with $o(y')=2$ such that $d(y', y) \leq 2$. This implies that $d(x, y) \leq 3$. Similarly, the result holds if $k$ is prime. So  assume that both $l$ and $k$ are composite.  Let $p_1$ and $p_2$ be prime divisors of $l$ and $k$, respectively and without loss of generality, take $p_1 < p_2.$  We have the following two cases:\\

\noindent \textbf{Case 1:} $p_2^2 \mid k$.\\ 
The cyclic decomposition of $y$ gives $p_2^2 \leq n$. This implies $p_1 + p_2< 2p_2< p_2^2 \leq n$. Then there exists two disjoint cycles $a$ and $b$ in $S_n$ such that $o(a) = p_1, o(b) = p_2$. By Proposition \ref{adjacent}, $x \sim a$ and $b \sim y$. Also $a \sim b$ as $ab=ba$. Therefore, we have a path $x \sim a \sim b \sim y$  in $\Delta^o(S_n)^*$, the result holds.\\

\noindent \textbf{Case 2:} $p_2^2 \nmid k$.\\ 
There exists another prime divisor $p_3$ of $k$ as $k$ is composite. The cyclic decomposition of $y$ gives $p_2+p_3 \leq n$ and this implies $p_1+p_3< p_2 + p_3 \leq n$. Using the same argument as in Case 1, we get a path $x \sim a' \sim b' \sim y$ for some $a' , b' \in S_n$ such that $o(a') = p_1$ and $o(b') = p_3$. Hence, the result holds.
\end{proof}
We denote the set $\{1,2,\ldots,n\}$ by $[n]$. Let $T=\{p_1,p_2,\ldots,p_k\}$ be a subset of $\pi_n$ and let $\alpha=(\alpha_1,\alpha_2,\ldots,\alpha_k)\in \mathbb{N}^k$. Define $M_T^{\alpha}=p_1^{\alpha_1}+p_2^{\alpha_2}+\cdots+p_k^{\alpha_k}$. In order to determine the exact diameter of $\Delta^o(S_n)^*,$ we prove the following proposition.

\begin{proposition}\label{T_1T_2_prop}
Suppose  $\Delta^o(S_n)^*$ is connected. Then diam$(\Delta^o(S_n)^*) = 3$ if and only if there exist nonempty sets $T_1, T_2 \subseteq \pi_n$  satisfy the following:
	
\begin{enumerate}[\rm (i)]
\item $T_1 \cap T_2 = \varnothing$;
\item $M_{T_1}^{\alpha}\in \{n-1, n\}$ for some $\alpha\in \mathbb{N}^{|T_1|}$  and $M_{T_2}^{\beta}\leq n$ for some $\beta\in \mathbb{N}^{|T_2|}$ with $ M_{T_2}^{\beta}+ p > n$ for any $p \in T_1$.
\end{enumerate}
\end{proposition}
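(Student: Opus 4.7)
The approach is to prove both implications by exploiting Proposition~\ref{adjacent}: two vertices $u,v$ are adjacent in $\Delta^o(S_n)$ iff $\mathrm{lcm}(o(u),o(v))$ is the order of some element of $S_n$, equivalently, the prime-power decomposition of this lcm has prime powers summing to at most $n$. The observation that drives the whole argument is that the prime $p=2$ is always available in $\pi_n$ and will force at least one of the two footprint sums associated with a distance-$3$ pair to lie in $\{n-1,n\}$.

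For $(\Leftarrow)$, the plan is to fix $\alpha,\beta$ satisfying (ii) and take $x\in S_n$ a product of disjoint cycles of lengths $p^{\alpha_p}$ ($p\in T_1$) and $y$ analogous for $T_2,\beta$; both lie in $V(\Delta^o(S_n)^*)=S_n\setminus\{e\}$ by Theorem~\ref{Dom-Sn}. Since $M_{T_1}^{\alpha}+M_{T_2}^{\beta}\geq(n-1)+2>n$, Proposition~\ref{adjacent} gives $x\not\sim y$. To rule out a common neighbour $z$, I would first reduce to the prime-order case: if $p\mid o(z)$ then $\mathrm{lcm}(p,o(x))\mid\mathrm{lcm}(o(z),o(x))$, and since achievable orders in $S_n$ are closed under divisors (take an appropriate power), any element of order $p$ distinct from $x,y,e$ (abundant because $p$-cycles exist for $p\leq n$) would also be a common neighbour. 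With $o(z)=p$ prime, there are two cases: if $p\notin T_1$, then $p+M_{T_1}^{\alpha}\geq 2+(n-1)>n$ makes $\mathrm{lcm}(p,o(x))$ unachievable, so $z\not\sim x$; if $p\in T_1$, then by (i) $p\notin T_2$, so $\mathrm{lcm}(p,o(y))=p\,o(y)$ has footprint $p+M_{T_2}^{\beta}>n$ by hypothesis and $z\not\sim y$. Hence $d(x,y)\geq 3$, and combined with Theorem~\ref{dist_3} this gives $d(x,y)=3$.

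For $(\Rightarrow)$, take $x,y$ realising the diameter, so $d(x,y)=3$. The argument in the proof of Theorem~\ref{dist_3} shows $\gcd(o(x),o(y))>1$ forces $d(x,y)\leq 2$, so $\gcd(o(x),o(y))=1$. Let $T_1^0=\{p\in\pi_n:p\mid o(x)\}$ and $\alpha^0=(\alpha^0_p)$ where $p^{\alpha^0_p}$ is the exact power of $p$ in $o(x)$; define $T_2^0,\beta^0$ symmetrically; put $M_i$ for the corresponding sums. Then $T_1^0\cap T_2^0=\emptyset$, $M_1,M_2\leq n$ (since $x,y$ exist), and $M_1+M_2>n$ (since $x\not\sim y$). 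The key step: for every prime $p\in\pi_n$, pick an element $z\in S_n\setminus\{x,y,e\}$ of order $p$; if both $p+M_1\leq n$ and $p+M_2\leq n$ held, $z$ would be a common neighbour, contradicting $d(x,y)=3$. Hence for every prime $p\leq n$, $p+M_1>n$ or $p+M_2>n$. Specialising to $p=2$ gives $M_1\geq n-1$ or $M_2\geq n-1$; after possibly interchanging $x$ and $y$, assume $M_1\in\{n-1,n\}$. Setting $T_1:=T_1^0$, $T_2:=T_2^0$, $\alpha:=\alpha^0$, $\beta:=\beta^0$ satisfies (i) and the two $M$-conditions of (ii); the remaining condition $M_{T_2}^{\beta}+p>n$ for $p\in T_1$ follows from the prime-$p$ dichotomy above, because $p\in T_1$ makes $z\sim x$ automatic ($\mathrm{lcm}(p,o(x))=o(x)$ is realised by $x$), so the obstruction to $z$ being a common neighbour must collapse to $z\not\sim y$, i.e.\ $p+M_2>n$.

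The main delicate point, I expect, is the reduction to prime-order common neighbours in $(\Leftarrow)$, which relies on the divisor-closure of $K_{S_n}$ and on the existence of enough $p$-cycles in $S_n$ for $p\leq n$ to pick a witness distinct from the two fixed vertices; both facts are routine but must be invoked explicitly to keep the case analysis clean.
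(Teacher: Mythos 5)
Your proof is correct and follows essentially the same route as the paper: both directions rest on Proposition~\ref{adjacent} together with the fact that $m\in K_{S_n}$ exactly when the maximal prime powers dividing $m$ sum to at most $n$, the $\{n-1,n\}$ condition is extracted from an order-$2$ (transposition) common neighbour exactly as in the paper, and the upper bound comes from Theorem~\ref{dist_3}. Your reduction to prime-order common neighbours via divisor-closure of $K_{S_n}$ is just a repackaging of the paper's observation that every neighbour of $x$ has order dividing $o(x)$, so no substantive difference arises.
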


\begin{proof}
 Let diam$(\Delta^o(S_n)^*) = 3$. Then there exist $x, y \in S_n \setminus \{e\}$ such that $d(x, y) = 3$. Consider $T_1 = \{p \in \pi_n : p \mid o(x) \}$ and $T_2 = \{q \in \pi_n : q \mid o(y) \}$. If $T_1 \cap T_2 \neq \varnothing$, then there exists $r \in T_1 \cap T_2$. As a result, we have $x' \in \langle x \rangle $ and $y' \in \langle y \rangle$ such that $o(x') = o(y') = r$. This implies $x \sim x' \sim y$, a contradiction to  the condition $d(x, y) = 3$. So, $T_\cap T_1=\varnothing$. 
 
Suppose $T_1=\{p_1,p_2,\ldots,p_k\}$ and $T_2=\{q_1,q_2,\ldots,q_l\}$. Take $\alpha=(\alpha_1,\alpha_2,\ldots,\alpha_k)$ where $p_i^{\alpha_i}\mid o(x)$ and $p_i^{\alpha_i+1}\nmid o(x)$ for $1\leq i \leq k$, $\beta=(\beta_1,\beta_2,\ldots,\beta_l)$ where $q_j^{\beta_j}\mid o(y)$ and $q_j^{\beta_j+1}\nmid o(y)$. From the cyclic decomposition of $x$, it is clear that at least $M_{T_1}^{\alpha}$ symbols are not fixed by $x$. Similarly for $y$, at least $M_{T_2}^{\beta}$ symbols are not fixed. So,  both $M_{T_1}^{\alpha}$ and $M_{T_2}^{\beta}$ are at most $n$. Without loss of generality, take  $x=z_1z_2 \cdots z_k$ as a product of  disjoint cycles $z_i$ of length $p_i^{\alpha_i}$ for $1\leq  i \leq k$  and $y=w_1w_2 \cdots w_l$ as a product of  disjoint cycles $w_j$ of length $q_j^{\beta_j}$ for $1\leq  j \leq l$. If both  $M_{T_1}^{\alpha}$ and $M_{T_2}^{\beta}$ are less than $n-1$, then there exist $i_1, j_1, i_2, j_2 \in [n]$ such that $i_1$ and $ j_1$ are fixed by $x$, and  $i_2$ and $j_2$ are fixed by $y$. The transpositions $\sigma = (i_1, j_1)$ and $\tau = (i_2, j_2)$ commute with $x$ and $y$, respectively. Therefore, we have $x \sim \sigma \sim y$, a contradiction to  the condition $d(x, y) = 3$. So, at least   $M_{T_1}^{\alpha}$ or $M_{T_2}^{\beta}$  is greater than or equal to $n-1$, and without loss of generality, take  $M_{T_1}^{\alpha}\in \{n-1, n\}$. If there exists $p \in T_1$ such that $ M_{T_2}^{\beta}+ p \leq n$, then by using the similar argument as above, we will get a contradiction to the condition $d(x, y) = 3$.
 
Conversely, suppose there exist $T_1, T_2 \subseteq \pi_n$  satisfying the condition given in the hypothesis. Let $T_1=\{p_1,p_2,\ldots,p_k\}$, $T_2=\{q_1,q_2,\ldots,q_l\}$,  $\alpha=(\alpha_1,\alpha_2,\ldots,\alpha_k)$ and  $\beta=(\beta_1,\beta_2,\ldots,\beta_l)$. Take  $x=z_1z_2 \cdots z_k$ as a product of  disjoint cycles $z_i$ of length $p_i^{\alpha_i}$ for $1\leq  i \leq k$  and $y=w_1w_2 \cdots w_l$ as a product of  disjoint cycles $w_j$ of length $q_j^{\beta_j}$ for $1\leq  j \leq l$.  Let $\sigma \in S_n \setminus \{e\}$  such that  $\sigma \sim x$. By  Proposition \ref{adjacent} and the condition $M_{T_1}^{\alpha}\in \{n-1, n\}$,  the order of $\sigma$ is a divisor of $o(x)$. The condition $ M_{T_2}^{\beta}+ p > n$ for any $p \in T_1$ implies $N[x] \cap N[y] = \varnothing$. Thus $ d(x,y)\geq 3$ and  the result follows from Theorem \ref{dist_3}.
\end{proof}
\begin{corollary}\label{T_1T_2_cor2}
Let $n\geq 4$ and neither $n$ nor $n-1$ be a prime. If $n =  p^l$ or $p^l+1$ for some $l\geq 2$ and a prime $p$, then  diam$(\Delta^o(S_n)^*) = 3$.
\end{corollary}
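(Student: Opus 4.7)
The plan is to apply Proposition \ref{T_1T_2_prop} to produce the desired pair $(T_1, T_2)$, with the existence of $T_2$ being supplied essentially for free by Proposition \ref{T_T'}.

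First, I would verify the hypothesis that $\Delta^o(S_n)^*$ is connected: this is immediate from Proposition \ref{connec-order-Comm-S_n}, since neither $n$ nor $n-1$ is prime by assumption. Next, I would take $T_1 = \{p\}$ with exponent vector $\alpha = (l)$. Then $M_{T_1}^{\alpha} = p^l$, which equals $n$ when $n = p^l$ and equals $n-1$ when $n = p^l + 1$, so in either situation $M_{T_1}^{\alpha} \in \{n-1, n\}$. Because $l \geq 2$ gives $p < p^l \leq n$, the singleton $T_1$ is a valid nonempty subset of $\pi_n$, and the ``$T_1$-half'' of condition (ii) in Proposition \ref{T_1T_2_prop} is satisfied.

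Then, to produce $T_2$, I would apply Proposition \ref{T_T'} with $T = T_1 = \{p\}$: since $P_{T_1} = p \leq n$, it supplies a nonempty $T_2 \subseteq \pi_n \setminus \{p\}$ satisfying $P_{T_2} \leq n$ and $P_{T_1} + P_{T_2} > n$, that is, $P_{T_2} > n - p$. Setting $\beta = (1,1,\ldots,1) \in \mathbb{N}^{|T_2|}$ converts these into $M_{T_2}^{\beta} = P_{T_2} \leq n$ and $M_{T_2}^{\beta} + p > n$, which is precisely what condition (ii) of Proposition \ref{T_1T_2_prop} demands for $T_2$. Disjointness $T_1 \cap T_2 = \varnothing$ is automatic from the construction, and Proposition \ref{T_1T_2_prop} then forces $\mathrm{diam}(\Delta^o(S_n)^*) = 3$.

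I do not foresee any genuine obstacle: the corollary is essentially a plug-in of the preceding machinery, and the only small check is confirming that $\{p\}$ is an admissible input to Proposition \ref{T_T'}, which it is since $p \leq n$. All of the nontrivial combinatorial work has already been absorbed into Propositions \ref{T_T'} and \ref{T_1T_2_prop}; the special shape $n \in \{p^l, p^l+1\}$ is used only to pin $M_{T_1}^{\alpha}$ down to the set $\{n-1, n\}$ using a single prime in $T_1$.
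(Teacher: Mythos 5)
Your proposal is correct and follows essentially the same route as the paper: take $T_1=\{p\}$ with $\alpha=(l)$ so that $M_{T_1}^{\alpha}=p^l\in\{n-1,n\}$, obtain $T_2$ from Proposition \ref{T_T'} applied to $T=\{p\}$, and set $\beta=(1,\ldots,1)$ so that the conditions of Proposition \ref{T_1T_2_prop} are met. The only difference is that you make explicit the (routine) connectedness check and the verification that $M_{T_1}^{\alpha}$ lands in $\{n-1,n\}$, which the paper leaves implicit.
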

\begin{proof}
In view of Proposition \ref{T_1T_2_prop}, we construct $T_1, T_2 \subseteq \pi_n$ such that
\begin{enumerate}[\rm (i)]
\item $T_1 \cap T_2 = \varnothing$;
\item $M_{T_1}^{\alpha}\in \{n-1, n\}$ for some $\alpha\in \mathbb{N}^{|T_1|}$  and $M_{T_2}^{\beta}\leq n$ for some $\beta\in \mathbb{N}^{|T_2|}$ with $ M_{T_2}^{\beta}+ p > n$ for any $p \in T_1$.
\end{enumerate}

Take $T_1 = \{p\}$ and $\alpha = l$. By Proposition \ref{T_T'}, we get $T_2\subseteq \pi_n\setminus T_1$ with $ P_{T_2}\leq n$ such that $P_{T_1}+ P_{T_2}>n$. Now let $T_2=\{q_1,q_2,\ldots,q_k\}$ and take $\beta=(1,1,\ldots,1)\in \mathbb{N}^k$. Then $T_1$ and $T_2$  satisfy the required conditions and the result holds.
\end{proof}

Consider $3\leq n \leq 20$.  By Proposition \ref{connec-order-Comm-S_n}, $\Delta^o(S_n)^*$ is connected if $n=9,10,15,16$. Also by Corollary \ref{T_1T_2_cor2}, diam$\Delta^o(S_n)^*=3$  for $n=9,10,16$. For  $n=15$, take  $T_1=\{3,11\}$, $\alpha=(1,1)$, $T_2=\{13\}$, $\beta=1$.
Then by Proposition \ref{T_1T_2_prop}, diam$\Delta^o(S_{15})^*=3$. We will now show that diam$(\Delta^o(S_n)^*)=3$ when $n$ is sum of two distinct prime powers.

\begin{corollary}\label{T_1T_2_cor1}
Let $n\geq 20$ and neither $n$ nor $n-1$ be a prime. If $n =  p_1^{k_1} + p_2^{k_2}$, where $5\leq p_1<p_2$ and $k_1$ and $k_2$ are positive integers, then  diam$(\Delta^o(S_n)^*) = 3$.
\end{corollary}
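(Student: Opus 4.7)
The plan is to invoke Proposition~\ref{T_1T_2_prop} with the choice $T_1 = \{p_1, p_2\}$ and $\alpha = (k_1, k_2)$, for which
\[
M_{T_1}^{\alpha} = p_1^{k_1} + p_2^{k_2} = n
\]
automatically lies in $\{n-1, n\}$; disjointness (condition (i) of Proposition~\ref{T_1T_2_prop}) will be enforced by requiring $T_2 \cap \{p_1, p_2\} = \varnothing$. It remains to produce a nonempty $T_2 \subseteq \pi_n \setminus \{p_1, p_2\}$ and an exponent vector $\beta$ satisfying $M_{T_2}^{\beta} \leq n$ and $M_{T_2}^{\beta} + p_1 > n$, the companion inequality for $p_2$ being automatic since $p_1 < p_2$. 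Equivalently, I must realize some integer of the window $(n - p_1, n]$ as a sum of prime powers whose distinct prime bases avoid $\{p_1, p_2\}$.

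The first move is to apply Proposition~\ref{T_T'} to the singleton starting set $\{p_1\}$ (legitimate as $p_1 \leq n$), obtaining a nonempty $T_2^{*} \subseteq \pi_n \setminus \{p_1\}$ with $P_{T_2^{*}} \leq n$ and $P_{T_2^{*}} > n - p_1$. In the generic situation $p_2 \notin T_2^{*}$, the choice $T_2 = T_2^{*}$ with $\beta = (1, 1, \ldots, 1)$ at once meets all the hypotheses of Proposition~\ref{T_1T_2_prop}, yielding diam$(\Delta^o(S_n)^*) = 3$.

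The substantive work is the case $p_2 \in T_2^{*}$. Here the strategy is to discard $p_2$ and compensate by enlarging $T_2^{*} \setminus \{p_2\}$ with additional primes drawn from $\pi_n \setminus (T_2^{*} \cup \{p_1\})$ and by selecting non-trivial exponents. Since $p_1 \geq 5$ forces $\{2, 3\} \subseteq \pi_n \setminus \{p_1, p_2\}$, the prime powers $2^a$ and $3^b$ provide a dense reservoir of increments. For instance, when $n = 94 = 5 + 89$ the interval $(89, 94]$ contains no prime, yet $T_2 = \{2, 3\}$ with $\beta = (6, 3)$ gives $M_{T_2}^{\beta} = 2^6 + 3^3 = 91 \in (89, 94]$; and when $n = 52 = 5 + 47$ the simpler $T_2 = \{7\}$ with $\beta = 2$ yields $M_{T_2}^{\beta} = 49 \in (47, 52]$.

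The main obstacle is showing that some such compensation always succeeds inside the narrow window $(n - p_1, n]$ of width $p_1 \geq 5$. The hypothesis $n \geq 20$, together with Theorem~\ref{Ramanujum-theorem}, supplies enough primes below $n$ to cover the gap; I expect the argument to split into subcases according to the position of $p_2$ relative to the nearest prime below $n$ and to the magnitude of $p_1$, with a small residue (say $20 \leq n \leq 50$) verified by direct construction of $T_2$ and $\beta$.
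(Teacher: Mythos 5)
Your reduction to Proposition~\ref{T_1T_2_prop} with $T_1=\{p_1,p_2\}$, $\alpha=(k_1,k_2)$, and the observation that only the inequality $M_{T_2}^{\beta}+p_1>n$ needs to be enforced (the one for $p_2$ following from $p_1<p_2$), all match the paper. But the proof is not complete: the case you yourself call ``the substantive work,'' namely when the set $T_2^{*}$ produced by Proposition~\ref{T_T'} contains $p_2$, is never actually handled. You give two numerical instances ($n=94$ and $n=52$) and then state that you ``expect the argument to split into subcases,'' which is a plan, not a proof. This is a genuine gap, and it is exactly the hard point: Proposition~\ref{T_T'} applied to $\{p_1\}$ gives you no control over whether $p_2$ lands in the output, and applying it instead to $\{p_1,p_2\}$ only yields $P_{T'}>n-p_1-p_2$, which is too weak. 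Discarding $p_2$ from $T_2^{*}$ can cost you up to $p_2$, which may be close to $n$, so ``compensating with powers of $2$ and $3$'' inside a window of width $p_1\geq 5$ is a nontrivial claim that needs an actual argument.

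The paper sidesteps this difficulty by not using Proposition~\ref{T_T'} at all for the construction of $T_2$. Instead it builds $T_2$ greedily: since $|\pi_m\setminus\pi_{\lfloor m/2\rfloor}|\geq 3$ for $m\geq 17$ (Theorem~\ref{Ramanujum-theorem}), at each stage one can choose a prime $q_i>\lfloor n_i/2\rfloor$ that avoids \emph{both} $p_1$ and $p_2$, set $n_{i+1}=n_i-q_i$, and iterate until the remainder $n_{k+1}$ drops below $17$; if $n_{k+1}<p_1$ one is done with $\beta=(1,\dots,1)$, and otherwise $5\leq n_{k+1}\leq 16$ and the remainder is absorbed by adjoining $2^2$ and/or $3^2$ (legitimate since $p_1\geq 5$ keeps $2,3\notin T_1$ and each $q_i>n_{k+1}\geq 5$). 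If you want to salvage your route, you would need to prove a statement of the form: for every $n\geq 20$ and every prime $p_1\geq 5$, some integer in $(n-p_1,n]$ is a sum of prime powers with distinct bases avoiding $\{p_1,p_2\}$ --- which is essentially the paper's lemma-level content, and which your proposal asserts rather than establishes.
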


\begin{proof}
In view of Proposition \ref{T_1T_2_prop}, we construct $T_1, T_2 \subseteq \pi_n$ such that
\begin{enumerate}[\rm (i)]
\item $T_1 \cap T_2 = \varnothing$;
\item $M_{T_1}^{\alpha}\in \{n-1, n\}$ for some $\alpha\in \mathbb{N}^{|T_1|}$  and $M_{T_2}^{\beta}\leq n$ for some $\beta\in \mathbb{N}^{|T_2|}$ with $ M_{T_2}^{\beta}+ p > n$ for any $p \in T_1$.
\end{enumerate}
Take $T_1 = \{p_1, p_2\}$ and $\alpha = (k_1,k_2)$. We will now construct $T_2$. Since $n \geq 17$, by Theorem \ref{Ramanujum-theorem} we have $|\pi_n \setminus \pi_{\lfloor \frac{n}{2}\rfloor}| \geq 3$. Choose a prime $q_1$, different from both $p_1$ and $p_2$ and  belongs to the set $\pi_n \setminus \pi_{\lfloor \frac{n}{2}\rfloor}$. Let $n_2 = n_1 - q_1$, where $n_1 = n$. Note that $n_2 \leq \lfloor \frac{n_1}{2}\rfloor<q_1.$ If $n_2<17$ then stop, otherwise choose a prime $q_2$, different from both $p_1$ and $p_2$ which belongs to the set $\pi_{n_2} \setminus \pi_{\lfloor \frac{n_2}{2}\rfloor}$ and take  $n_3 = n_2 - q_2$. Continue this  till  $n_{k} \geq 17$, $n_{k+1} < 17$. Then we have the following:
\begin{enumerate}[\rm (i)]
\item $\{q_k, q_{k-1}, \ldots, q_1\} \subseteq \pi_n \setminus \{p_1,p_2\}$;
\item $n_{k+1} = n - (q_1 + q_2 + \cdots + q_k)$ as $n_{i +1} = n_i - q_i$, for $1 \leq i \leq k$;
\item $n_{i+1} < q_i$ for $1 \leq i \leq k$.
\end{enumerate}
If $n_{k +1} < p_1$, take $T_2 = \{q_k, q_{k-1}, \ldots, q_1\}$ and $\beta=(1,1,\ldots,1)\in\mathbb{N}^k$. Then $T_1$ and $T_2$  satisfy the required conditions and the result holds.

Now  suppose  $n_{k +1} \geq p_1$. Since $p_1\geq 5$, we have $5\leq n_{k+1}\leq 16.$ Consider the following:
\begin{enumerate}[\rm (i)]
\item  $q_{k+1}=2$ and $\beta_{k+1}=2$ if $n_{k+1}=5,6,7,8$;
\item  $q_{k+1}=3$ and $\beta_{k+1}=2$ if $n_{k+1}=9,10,11,12,13$;
\item  $q_{k+1}=2,q_{k+2}=3$ and $\beta_{k+1}=2,\beta_{k+2}=2 $ if $n_{k+1}=14,15,16$;
\end{enumerate}
Then take $T_2= \{q_{k+2},q_{k+1},q_k, q_{k-1}, \ldots, q_1\}$ and $\beta=(2,2,1,\ldots,1,1)\in\mathbb{N}^{k+2}$ (the inclusion of $q_{k+2}$ and $\beta_{k+2}$ depend on the value of $n_{k+1}$). Then $T_1$ and $T_2$  satisfy the required conditions and the result holds.
\end{proof}

We propose the following conjecture on the diameter of $\Delta^o(S_n)^*.$\\

\noindent\textbf{Conjecture:} Let $n\geq 3$ and neither $n$ nor $n-1$ be a prime. Then daim$(\Delta^o(S_n)^*)=3$.

\subsection{Reduced order super commuting graph $\Delta^o(A_n)^*$}

In this subsection, we examine the connectedness and diameter of $\Delta^o(A_n)^*$.  We will first characterize the values of  $n$ for which $\Delta^o(A_n)^*$ is connected.
\begin{proposition}\label{connec-order-Comm-A_n}
Let $n \geq 4$. Then $\Delta^o(A_n)^*$ is connected if and only if none of $n$, $n-1$ and $n-2$ is a prime. Also if either $n=4$ or exactly one of $n$, $n-1$ and $n-2$ is  a prime, then $\Delta^o(A_n)^*$ has two  components and if $n\geq 5$ with both $n$ and $n-2$ are primes, then $\Delta^o(A_n)^*$ has three  components.
\end{proposition}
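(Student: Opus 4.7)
The plan is to translate adjacency via Proposition \ref{adjacent} (namely, $x \sim y$ in $\Delta^o(A_n)$ iff $\mathrm{lcm}(o(x), o(y)) \in K_{A_n}$) and to use Theorem \ref{Dom-An} so that $V(\Delta^o(A_n)^*) = A_n \setminus \{e\}$. Since same-order elements are automatically adjacent, the whole question reduces to the quotient graph on $K_{A_n} \setminus \{1\}$, in which two orders $d_1, d_2$ are joined iff $\mathrm{lcm}(d_1, d_2) \in K_{A_n}$.

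The first ingredient I would prove is an \emph{isolation lemma}: whenever $p$ is an odd prime lying in $\{n-2, n-1, n\}$, the order-$p$ vertices form their own component. I would establish this by checking that no element of order $pm$ lies in $A_n$ for any $m \geq 2$. The parity condition on alternating group elements pins down the minimum number of moved symbols to be $p+4$ for order $2p$ (a $p$-cycle plus two disjoint $2$-cycles), $p+3$ for order $3p$ (a $p$-cycle plus a $3$-cycle), $p^2$ for order $p^2$, and $p+q \geq p+3$ for order $pq$ with $q$ another odd prime. Each of these minima exceeds $n$ when $p \geq n-2$. The edge case $p = 2$ occurring only inside $A_4$ is handled by direct inspection.

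The second ingredient is a \emph{routing argument} showing that if no prime lies in $\{n-2, n-1, n\}$, then every $x \in A_n \setminus \{e\}$ connects to an order-$2$ vertex. Since $x$ is adjacent to each of its powers, it suffices to consider $o(x) = p$ prime. If $p$ is odd then $p \leq n-3$ by hypothesis; for $p \leq n-4$ the element of order $2p$ (a $p$-cycle plus two disjoint $2$-cycles) gives direct adjacency to order $2$, while for $p = n - 3$ the identity $p + 3 = n$ provides an element of order $3p$ (a $p$-cycle plus a $3$-cycle) placing $x$ adjacent to a vertex of order $3$, which in turn reaches order $2$ via an element of order $6$ (a $3$-cycle plus two disjoint $2$-cycles, whose $7$ symbols fit because the hypothesis forces $n \geq 10$). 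The case $p = 3$ is already covered by this order-$6$ witness. This detour through order $3$ is the main obstacle, since the parity constraint on $A_n$ blocks the direct $2p$ route at $p = n - 3$ that is available for the $S_n$ analogue (Proposition \ref{connec-order-Comm-S_n}).

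For the component count, the isolation lemma contributes one component per prime in $\{n-2, n-1, n\}$, and the routing argument applied to the orders outside this set supplies at most one further ``main'' component. Parity of consecutive integers rules out more than two primes there, and for $n \geq 5$ the only two-prime configuration is $\{n, n-2\}$ (both odd, since $n-1$ is even and exceeds $2$). Hence exactly one prime in $\{n-2, n-1, n\}$ gives two components, while both $n$ and $n-2$ prime gives three. The special case $n = 4$, with $\{2, 3\} \subseteq \{2, 3, 4\}$ both prime but $K_{A_4} = \{1, 2, 3\}$, has an empty main component and so yields only two components, verified by a direct check.
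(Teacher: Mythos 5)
Your overall strategy --- reduce everything to the order set $K_{A_n}$ via Proposition \ref{adjacent}, isolate the primes lying in $\{n-2,n-1,n\}$ by counting moved symbols with the parity constraint, and route all remaining orders to a small-order hub --- is essentially the paper's proof. The only real difference is cosmetic: you route to order $2$ (with a detour through order $3$ when $p=n-3$), whereas the paper uses order $3$ as the hub throughout. Your isolation lemma and the connectivity direction (where the hypothesis forces $n\ge 10$) are correct as stated.

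The gap is in the component count. Your routing argument is established under the hypothesis that no prime lies in $\{n-2,n-1,n\}$, which guarantees $n\ge 10$; in the disconnected cases you then ``apply it to the orders outside this set'' for arbitrary $n\ge 5$, where the key witness --- an element of order $6$, namely a $3$-cycle together with two disjoint $2$-cycles on $7$ symbols --- need not exist. Concretely, take $n=6$, where exactly one of $6,5,4$ is prime: the element orders of $A_6$ are $1,2,3,4,5$, and there is no element of order $6$, $12$ or $15$, so by Proposition \ref{adjacent} the order-$3$ vertices are adjacent only to one another. Hence $\Delta^o(A_6)^*$ has three components (the order-$2$ and order-$4$ elements together, the order-$3$ elements, and the order-$5$ elements), not the two your count --- and the proposition itself --- claim. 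The assertion that the non-isolated orders form a single ``main'' component therefore needs a separate verification for small $n$; it holds for $n=4,5$ and for all $n\ge 7$ (where $p+3\le n$ supplies order-$3p$ elements and $n\ge 7$ supplies order-$6$ elements), but fails at $n=6$. The paper's own proof has exactly the same lacuna, asserting without justification that all vertices of order other than $q$ form one component, so this is a defect you have inherited rather than introduced --- but your write-up does not catch it either.
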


\begin{proof}
First suppose none of $n,n-1,n-2$ is a prime. Then $n\geq 10$ and $p\leq n-3$ for any $p\in \pi_n$. So, by Proposition \ref{adjacent} it is easy to see that any vertex of prime order is adjacent to vertices of order three and any vertex having composite order is adjacent to at least one vertex of order $p$ for some $p\in \pi_n$. So $\Delta^o(A_n)^*$ is connected.

Now, suppose at least one of $n$, $n-1$ or $n-2$ is a prime. Then we consider the following two cases:\\
\noindent \textbf{Case 1:}  $n=4$ or exactly one of $n,n-1$ or $n-2$ is a prime\\ 
For $n=4$, it is easy to see that $\Delta^o(A_4)^*$ has two  components, one component contains $3$ vertices of order $2$ and other component contains $8$ vertices of order $3$. Now suppose $n\geq 5$ and exactly one of $n,n-1$ or $n-2$ is a prime. Let  that prime  be $q$. Then by Proposition \ref{adjacent} and a similar argument used in the proof of Proposition \ref{connec-order-Comm-S_n}, we have any vertex of order $q$ is adjacent to a vertex of order $q$ only in $\Delta^o(A_n)^* $. Also  all the vertices having order other than $q$ forms a  component of $\Delta^o(A_n)^*$. Hence in this case, $\Delta^o(A_n)^*$ has exactly two  components.

\noindent \textbf{Case 2:}  $n\geq 5$ and both $n$ and $n-2$ are primes\\
 By a similar argument used previously, it can be seen that any vertex having orders $n$ or $n-2$ must be adjacent only  to the vertices with order $n$ or $n-2$, respectively in $\Delta^o(A_n)^*$. Also  all vertices having order other than $n$ or $n-2$ forms a  component of $\Delta^o(A_n)^*$. Hence, in this case there are exactly three components.  
\end{proof}
The proof of the following result is similar to Lemma \ref{dist_2} and so it is omitted. 

\begin{lemma}\label{distA_2}
Let $\Delta^o(A_n)^*$ be connected and let $x \in V(\Delta^o(A_n)^*)$. Then there exists  $y \in V(\Delta^o(A_n)^*)$ with $o(y) = 3$ such that $d(x,y)\leq 2.$
\end{lemma}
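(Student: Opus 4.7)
The plan is to mirror the proof of Lemma \ref{dist_2}, replacing the role of the transposition by a $3$-cycle and adjusting the constructions to ensure every auxiliary permutation lies in $A_n$. Since $\Delta^o(A_n)^*$ is assumed connected, Proposition \ref{connec-order-Comm-A_n} guarantees that none of $n$, $n-1$ or $n-2$ is prime; consequently $n\geq 10$ and every prime $p\in\pi_n$ satisfies $p\leq n-3$. These two facts are exactly the numerical slack needed to fit a $3$-cycle on symbols disjoint from an order-of-$x$ witness.

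I would split into the usual two cases, depending on whether $k=o(x)$ is prime or composite. When $k=3$, any $3$-cycle $y$ distinct from $x$ lies in the same $\rho_o$-class as $x$, so $x\sim y$ by the clique convention on equivalence classes. When $k$ is an odd prime $\geq 5$, I take $z$ to be a $k$-cycle in $A_n$; since $k\leq n-3$, at least three symbols are fixed by $z$, and I let $y$ be a $3$-cycle on three of them. Then $yz=zy$ gives $z\sim y$ in $\Delta(A_n)$, and since $o(x)=o(z)$, Proposition \ref{adjacent} (or the definition of $\Delta^o$) yields $x\sim y$. When $k=2$, a single transposition is odd, so I instead use a double transposition $z=(a,b)(c,d)\in A_n$ of order $2$; since $n\geq 10$, at least $n-4\geq 6$ symbols remain, and a $3$-cycle $y$ on three of them works exactly as before.

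For the composite case, I pick $w\in\langle x\rangle$ with $o(w)$ prime. Since $\langle x\rangle$ is abelian, $x$ and $w$ commute and are therefore adjacent in $\Delta(A_n)\subseteq\Delta^o(A_n)$. Applying the prime case to $w$ furnishes a vertex $y$ with $o(y)=3$ and $d(w,y)\leq 1$, so the path $x\sim w\sim y$ gives $d(x,y)\leq 2$.

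The only genuine departure from the $S_n$ argument is the parity constraint in the $k=2$ case: a lone transposition does not lie in $A_n$, forcing the use of a double transposition and hence a slightly stronger lower bound on $n$ to still accommodate a disjoint $3$-cycle. The inequality $n\geq 10$ supplied by Proposition \ref{connec-order-Comm-A_n} makes this automatic, which is why the authors treat the proof as a routine adaptation of Lemma \ref{dist_2} and omit the details.
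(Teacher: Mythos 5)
Your proposal is correct and is essentially the paper's intended argument: the paper omits the proof, stating only that it is similar to Lemma \ref{dist_2}, and your write-up carries out exactly that adaptation, correctly replacing the transposition by a $3$-cycle, using the bound $p\leq n-3$ from Proposition \ref{connec-order-Comm-A_n}, and handling the parity issue for order-$2$ elements via a double transposition.
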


\begin{theorem}\label{distA_3}
If $\Delta^o(A_n)^*$ is connected, then diam$(\Delta^o(A_n)^*) \leq 3$.
\end{theorem}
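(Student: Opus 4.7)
The plan is to mirror the proof of Theorem \ref{dist_3} for $S_n$, with $3$-cycles replacing transpositions so that every intermediate element remains in $A_n$. Let $x, y \in A_n \setminus \{e\}$ with $o(x) = l$, $o(y) = k$, and $m = \gcd(l, k)$. Connectedness and Proposition \ref{connec-order-Comm-A_n} force $n \geq 10$ and every prime $p \leq n$ to satisfy $p \leq n - 3$. If $m > 1$, Proposition \ref{adjacent} gives an $A_n$-element $a$ of order $m$ adjacent to both $x$ and $y$, so $d(x, y) \leq 2$.

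Suppose $m = 1$ and at least one of $l, k$ is prime, say $l$. Fix the $3$-cycle $y_0 = (n-2, n-1, n)$. Since every prime $p \leq n$ satisfies $p \leq n - 3$, I can exhibit an element $z_p \in A_n$ of order $p$ supported inside $\{1, \ldots, n-3\}$: a single odd $p$-cycle when $p$ is odd, or the double transposition $(1,2)(3,4)$ when $p = 2$. Such a $z_p$ commutes with $y_0$. Taking $p = l$ and invoking $\rho_o$-class completeness gives $x \sim y_0$ in $\Delta^o(A_n)$, while taking $p$ to be any prime divisor of $k$ and using $w = y^{k/p} \in \langle y \rangle$ gives the path $y \sim w \sim y_0$, so $d(y, y_0) \leq 2$. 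Hence $d(x, y) \leq 3$.

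Finally, if $\gcd(l, k) = 1$ with both $l, k$ composite, I follow the two-case analysis from the $S_n$ proof: choose primes $p_1 \mid l$ and $p_2 \mid k$ (necessarily distinct) and build commuting $a, b \in A_n$ of orders $p_1, p_2$; Proposition \ref{adjacent} then yields the length-$3$ path $x \sim a \sim b \sim y$. When both $p_1, p_2$ can be taken odd, the $S_n$ inequality $p_1 + p_2 \leq n$ (via $p_2^2 \leq n$ when $p_2^2 \mid k$, or via a second prime divisor $p_3$ of $k$ otherwise) puts disjoint $p_1$-cycle and $p_2$-cycle into $A_n$. If $l$, say, is a pure $2$-power, then $k$ is odd and I pair $a = (1,2)(3,4)$ with an odd $p_2$-cycle on fresh symbols.

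The main obstacle is the parity bookkeeping in this last sub-case, since the order-$2$ half of the pair occupies four symbols rather than two: I need $p_2 \leq n - 4$, which follows by separately analysing $p_2^2 \mid k$ (forcing $p_2 \leq \sqrt{n}$) and $p_2^2 \nmid k$ (forcing another odd prime $q > p_2$ of $k$ with $p_2 + q \leq n$, hence $2 p_2 < n$); each alternative gives $p_2 + 4 \leq n$ for $n \geq 10$. A similar re-selection of $p_1$ --- replacing a problematic $p_1 = n-3$ by $p_1 = 3$, which the cycle-room constraint on $x \in A_n$ forces to divide $l$ in that scenario --- handles the analogous edge case when both primes are meant to be odd.
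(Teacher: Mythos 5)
Your proof is correct and follows essentially the same route as the paper's: the same split into $\gcd(l,k)>1$ (distance $2$ via a common-order element), one of $l,k$ prime (reduce to a $3$-cycle, as in Lemma \ref{distA_2}), and both composite (a length-$3$ path $x\sim a\sim b\sim y$ through disjoint prime-order elements, with the case distinction on $p_2^2\mid k$ and the four-symbol double-transposition fix when an order-$2$ element is needed in $A_n$). The only difference is bookkeeping: the paper avoids your ``$p_1=n-3$'' edge case by fixing $p_1<p_2$ as the largest prime divisors from the outset, whereas you handle it by re-selecting $p_1=3$; both are valid.
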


\begin{proof}
Let $x, y \in A_n \setminus \{e\}$ with $o(x) = l$ and $o(y) = k$.  Let $ gcd(l, k) =m$. If $m>1$ then there exist $a, b \in A_n$ such that $a \in \langle x \rangle$ and $b \in \langle y \rangle$ with $o(a)=o(b)=m$. This gives $x \sim a \sim y$ and we have  $d(x, y) \leq 2$. 

Now consider $m = 1$. If $l$ or $k$ is a prime, an argument similar to the proof of Theorem \ref{dist_3} for  the same case gives $d(x, y) \leq 3$.  So  assume that both $l$ and $k$ are composite.  Let $p_1$ and $p_2$ be the largest prime divisors of $l$ and $k$, respectively and without loss of generality, take $p_1 < p_2.$  We have the following cases:\\

\noindent \textbf{Case 1:} $p_2^2 \mid k$.\\ 
The cyclic decomposition of $y$ gives $p_2^2 \leq n$. This implies $p_1 + p_2< 2p_2< p_2^2 \leq n$ for $p_1\geq 3$ and $4+p_2<p_2^2\leq n$ for $p_1=2$. Then there exists two disjoint permutations $a$ and $b$ in $A_n$ such that $o(a) = p_1, o(b) = p_2$. By Proposition \ref{adjacent}, $x \sim a$ and $b \sim y$. Also $a \sim b$ as $ab=ba$. Therefore, we have a path $x \sim a \sim b \sim y$  in $\Delta^o(A_n)^*$, the result holds.\\

\noindent \textbf{Case 2:} $p_2^2 \nmid k$.\\ 
There exists another prime divisor $p_3$ of $k$ as $k$ is composite. The cyclic decomposition of $y$ gives $p_2+p_3 \leq n$. We have $2\leq p_3<p_2$. So, when $p_1\geq 3$, we have $p_2\geq 5$ (since $p_1<p_2$) and $p_1+p_3< p_2 + p_3 \leq n$ if $p_3\geq 3$ and $p_1+4\leq p_2 + p_3 \leq n$ for $p_3=2$. Again, for $p_1=2$, we have $3\leq p_3<p_2$, i.e. $p_2\geq 5$. So, $4+p_3< p_2 + p_3 \leq n$. Using the same argument as in Case 1, we get a path $x \sim a' \sim b' \sim y$ for some $a' , b' \in A_n$ such that $o(a') = p_1$ and $o(b') = p_3$. Hence, the result holds.
\end{proof}
Let $T=\{p_1,p_2,\ldots,p_k\}$ be a subset of $\pi_n$ with $p_1<p_2< \cdots <p_k$ and let  $\alpha=(\alpha_1,\alpha_2,\ldots,\alpha_k)\in \mathbb{N}^k$. Define

	$$\widetilde{M_T^{\alpha}}=\left\{
\begin{array}{ll}
  \sum_{i=1}^n p_i^{\alpha_i},   & \text{ if }2\notin T \\\\
  2^{\alpha_1}+2+\sum_{i=2}^np_i^{\alpha_i},   & \text{ if }2\in T  \\ 
\end{array}
\right.$$
 The proof of the following result is similar to Proposition \ref{T_1T_2_prop} and so it is omitted.
\begin{proposition}\label{T_1T_2_prop_A_n}
Suppose  $\Delta^o(A_n)^*$ is connected. Then diam$(\Delta^o(A_n)^*) = 3$ if and only if there exist nonempty sets $T_1, T_2 \subseteq \pi_n$  satisfy the following:
	
\begin{enumerate}[\rm (i)]
\item $T_1 \cap T_2 = \varnothing$;
\item $\widetilde{M_{T_1}^{\alpha}}\in \{n-2, n-1, n\}$ for some $\alpha\in \mathbb{N}^{|T_1|}$  and $\widetilde{M_{T_2}^{\beta}}\leq n$ for some $\beta\in \mathbb{N}^{|T_2|}$ with $\widetilde{M_{T_2}^{\beta}}+ r > n$ for any $r \in \widetilde{T_1}$, where 

$$\widetilde{T_1}=\left\{
\begin{array}{ll}
  T_1,   & \text{ if }2\notin T_1 \\\\
 (T_1\setminus 2) \cup\{4\},   & \text{ if }2\in T_1  \\ 
\end{array}
\right.$$
\end{enumerate}

\end{proposition}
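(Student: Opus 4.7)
The plan is to mirror the proof of Proposition \ref{T_1T_2_prop}, modifying it for the parity constraints in $A_n$. Two changes must be tracked. First, the minimum number of symbols moved by an even permutation of a prescribed order is $\widetilde{M_T^{\alpha}}$ rather than $M_T^{\alpha}$: when $2\in T$, a parity-repairing transposition has to be appended to the cheapest cycle structure, producing the extra ``$+2$''. Second, the minimum cost of an even commuting bridge of prime-power order is $p$ symbols (a $p$-cycle) for an odd prime $p$, but $4$ symbols (a double transposition) for a $2$-element; this explains the substitution $2\mapsto 4$ in the passage from $T_1$ to $\widetilde{T_1}$.

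For necessity I would take $x,y\in A_n\setminus\{e\}$ with $d(x,y)=3$ and let $T_1,T_2$ (with exponents $\alpha,\beta$) be the sets of primes dividing $o(x),o(y)$; since $x,y$ are even, $\widetilde{M_{T_1}^{\alpha}},\widetilde{M_{T_2}^{\beta}}\leq n$. If $r\in T_1\cap T_2$, then order-$r$ powers of $x$ and $y$ lie in $A_n$ and share an order-equivalence class, providing a common neighbor and contradicting $d(x,y)=3$. If both $\widetilde{M_{T_1}^{\alpha}},\widetilde{M_{T_2}^{\beta}}\leq n-3$, minimal representatives in $[x]_{\rho_o}$ and $[y]_{\rho_o}$ each fix at least three symbols, so a pair of disjoint $3$-cycles placed on their fixed points (both in $A_n$, same order) bridge $x,y$ at distance $2$. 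Hence, up to swapping, $\widetilde{M_{T_1}^{\alpha}}\in\{n-2,n-1,n\}$. Finally, if some $r\in\widetilde{T_1}$ satisfies $\widetilde{M_{T_2}^{\beta}}+r\leq n$, I construct a bridge $\sigma$ disjoint from a minimal $y$-representative --- an $r$-cycle when $r$ is an odd prime in $T_1$, or a double transposition on $4$ symbols when $r=4$ (so $2\in T_1$). In both cases $o(\sigma)\mid o(x)$, so $\sigma$ shares an order with a power of $x\in A_n$, and combined with commutativity of $\sigma$ with the minimal $y$-representative this again yields $x\sim\sigma\sim y$.

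For sufficiency I would build $x$ and $y$ as minimal even realizations of the orders $\prod p_i^{\alpha_i}$ and $\prod q_j^{\beta_j}$ and show $d(x,y)\geq 3$ by excluding common neighbors. For any $\sigma\in A_n\setminus\{e\}$ with $\sigma\sim x$ in $\Delta^o(A_n)$, Proposition \ref{adjacent} forces $lcm(o(\sigma),o(x))\in K_{A_n}$; since $\widetilde{M_{T_1}^{\alpha}}\geq n-2$, no prime outside $T_1$ can appear in $o(\sigma)$ (its cheapest contribution would push the minimum cost strictly above $n$). Hence the prime-support $T_\sigma$ of $o(\sigma)$ lies inside $T_1$, which yields $\widetilde{M_{T_\sigma}^{\gamma}}\geq \min\widetilde{T_1}$. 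Condition $(ii)$ then implies $\widetilde{M_{T_\sigma}^{\gamma}}+\widetilde{M_{T_2}^{\beta}}>n$, so $lcm(o(\sigma),o(y))\notin K_{A_n}$ and $\sigma\not\sim y$; no common neighbor exists, and combined with Theorem \ref{distA_3} this gives $d(x,y)=3$.

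The main obstacle is keeping the parity bookkeeping straight around the prime $2$: the minimum order-$2^k$ element of $A_n$ and the minimum order-$2$ commuting bridge are both strictly more expensive than a naive ``$p$ symbols per $p^{\alpha}$'' rule would predict, so the two roles of $2$ --- one inside $\widetilde{M}$, one inside $\widetilde{T_1}$ --- require separate case splits. This is most delicate in step $(iii)$ of the necessity argument, where the bridge construction differs between $r=4$ (a double transposition) and $r$ odd (a single cycle), and in the sufficiency argument, where one must verify that no hidden cheaper even permutation can realize an order slightly outside the $T_1$-sector.
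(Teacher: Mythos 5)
Your proposal is correct and follows exactly the route the paper intends: the paper omits the proof of Proposition \ref{T_1T_2_prop_A_n}, stating only that it is similar to Proposition \ref{T_1T_2_prop}, and your argument is precisely that adaptation, with the two parity corrections (the ``$+2$'' in $\widetilde{M_T^{\alpha}}$ for the extra even cycle, and the $2\mapsto 4$ replacement in $\widetilde{T_1}$ for the cheapest even bridge of $2$-power order) correctly identified and carried through both directions.
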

By Proposition \ref{connec-order-Comm-A_n}, $\Delta^o(A_n)^*$ is disconnected for $3\leq n\leq 9$ and  $\Delta^o(A_{10})^*$ is connected. For $n=10$, take $T_1=\{3,7\}, T_2=\{2,5\},\alpha=(1,1)$ and $\beta=(1,1)$. Then $T_1$ and $T_2$ satisfy the conditions required for the  Proposition \ref{T_1T_2_prop_A_n} and hence diam$(\Delta^o(A_n)^*) = 3$. The results similar to the Corollary \ref{T_1T_2_cor2} and Corollary \ref{T_1T_2_cor1} can be proved for the group $A_n$ and so we propose the following conjecture.\\

\noindent\textbf{Conjecture:} Let $n\geq 4$ and none of $n-2,n-1$ and $n$ be a prime. Then diam$(\Delta^o(A_n)^*) = 3$.

\vspace{0.5cm}
\noindent \textbf{Acknowledgement:} The second author was supported by the Council of Scientific and Industrial Research grant no.  09/1248(0004)/2019-EMR-I, Ministry of Human Resource
Development, Government of India.

\vspace{0.5cm}
\noindent \textbf{Conflict of Interest:} On behalf of all authors, the corresponding author declares
that there is no conflict of interest.

\vskip .5cm

\noindent{\bf Addresses}:  Sandeep Dalal, Sanjay Mukherjee, Kamal Lochan Patra\\

\noindent School of Mathematical Sciences\\
National Institute of Science Education and Research Bhubaneswar\\
An OCC of Homi Bhabha National Institute\\
Jatni, Khurda, Odisha, 752050, India.\\

\noindent{\bf E-mails}: deepdalal@niser.ac.in, sanjay.mukherjee@niser.ac.in, klpatra@niser.ac.in
\end{document}